% Thursday, October 2, 2008  15:17

\documentclass[regno,12pt]{amsart}
\usepackage{amsmath,amscd,amssymb,amsthm,hyperref,bm}

%%%%%%%%%%%%%% equation numbering style
\newcommand{\mysection}[1]{\section{#1}
\setcounter{equation}{0}}

%%%%%%%%%%%%%%%%%%%%%%
\newtheorem{theorem}{Theorem}[section]
\newtheorem{lemma}[theorem]{Lemma}
\newtheorem{corollary}[theorem]{Corollary}

\theoremstyle{definition}

\newtheorem{definition}[theorem]{Definition}

\theoremstyle{remark}
\newtheorem{remark}[theorem]{Remark}
\newtheorem{example}[theorem]{Example}

%Math fonts
\newcommand{\bb}[1]{\mathbb{#1}}
\newcommand{\etext}[1]{\quad\emph{#1}\quad}
\newcommand{\itext}[1]{\quad\text{#1}\quad}
\newcommand{\ti}[1]{\textit{#1}}
\newcommand{\vect}[1]{\bm{#1}}
\newcommand{\er}[1]{\eqref{#1}}

%Math definitions

\newcommand{\con}{{\rm const}}

\newcommand{\diam}{{\rm diam}\,}
\newcommand{\dist}{{\rm dist}}

\newcommand{\eq}{\equiv}

\newcommand{\0}{\emptyset}
\newcommand{\8}{\infty}
\newcommand{\ol}{\overline}
\newcommand{\oo}{\overline{\Omega}}
\newcommand{\pa}{\partial}
\newcommand{\po}{\partial\Omega}
\newcommand{\real}{{\rm Re}\,}
\newcommand{\rn}{{\mathbb R}^n}

\newcommand{\sm}{\setminus}
\newcommand{\su}{\subset}

\newcommand{\tr}{{\rm tr}\,}
\newcommand{\ve}{\varepsilon}
\newcommand{\vp}{\varphi}

%%%%%%%%%%%%%%%%%%%%%%

\title[Boundary estimates for positive solutions]{Boundary estimates for positive solutions to second order elliptic equations}

\thanks{The work  was partially supported by NSF Grant DMS-9971052}
\author{Mikhail V. Safonov}

\address{School of Mathematics, University of Minnesota}
\email{safonov@math.umn.edu}
\keywords{Second-order elliptic equations, measurable coefficients, boundary Hopf lemma, estimates for ratios of solutions}

\subjclass[2000]{35J15, 35J67 (Primary). 35B05 (Secondary)}

\begin{document}

\begin{abstract}
Consider positive solutions to second order elliptic equations with measurable coefficients in a bounded domain, which vanish on a portion of the boundary. We give simple necessary and sufficient geometric conditions on the domain, which guarantee the Hopf-Oleinik type estimates and the boundary Lipschitz estimates for solutions. These conditions are sharp even for harmonic functions.
\end{abstract}

\maketitle
%%%%%% The body of the paper follows.

\mysection{Introduction. Formulation of main results}\label{S.1}

Let $\Omega$ be a bounded open set in $\rn$. Consider a second order elliptic operator
    \begin{equation}\label{1.1}
        Lu:= \sum_{i,j} a_{ij}D_{ij}u + \sum_i b_i D_iu
    \end{equation}
in $\Omega$, where $D_iu:=\pa u/\pa x_i,\; D_{ij}u:=D_iD_j u,\; a_{ij}=a_{ji}\in L^{\8}(\rn),\;
b_i\in L^{\8}(\rn)$, and $a_{ij}$ satisfy the \ti{uniform ellipticity condition}
    \begin{equation}\label{1.2}
        \nu |\xi|^2\le \sum_{i,j}a_{ij}\xi_i\xi_j\le \nu^{-1}|\xi|^2
        \itext{for all} \xi=(\xi_1,\ldots,\xi_n)\in\rn,
    \end{equation}
with a constant $\nu\in (0,1]$. In 1952, E. Hopf \cite{H52} and O.A. Oleinik \cite{O52} independently proved the following \ti{boundary point lemma}.

\begin{lemma}\label{L1.1}
    Suppose that $\Omega$ satisfies an \emph{interior sphere condition} at $x_0\in\po$, i.e. there exists a ball
    \[B:=B_{r_0}(y_0):=\{x\in\rn:\;|x-y_0|<r_0\}\su\Omega,\]
     with $x_0\in (\po)\cap (\pa B)$. Then for any function $u\in C^2(\Omega)\cap C(\oo)$ satisfying $\,u>0,\;Lu\le 0\,$ in $\Omega$, and $u(x_0)=0$, we have
    \begin{equation}\label{1.3}
        \liminf_{t\to 0^+} \frac{u(x_0+t\vect{l})}{t}>0.
    \end{equation}
     where $\vect{l}$ is an arbitrary interior vector to $B$ at  the point $x_0$, which means $x_0+t\vect{l}\in B$ for all $t$ in an interval $(0,t_0)$.
\end{lemma}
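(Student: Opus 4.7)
The plan is to carry out the classical Hopf barrier argument on the spherical shell $A:=\{x:\;r_0/2<|x-y_0|<r_0\}$, which is open, contained in $B\su\Omega$, and whose closure meets $\po$ at most along the outer sphere $\pa B$. On $A$ I would introduce the comparison function
\[v(x):=e^{-\alpha|x-y_0|^2}-e^{-\alpha r_0^2},\]
which vanishes on $\pa B$, is strictly positive on the inner sphere $\pa B_{r_0/2}(y_0)$, and depends on a large parameter $\alpha$ to be chosen.

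A direct computation gives
\[Lv=e^{-\alpha|x-y_0|^2}\Bigl[4\alpha^2\sum_{i,j}a_{ij}(x_i-y_{0,i})(x_j-y_{0,j})-2\alpha\sum_i a_{ii}-2\alpha\sum_i b_i(x_i-y_{0,i})\Bigr].\]
On $\ol A$ one has $|x-y_0|\ge r_0/2$, so by the ellipticity bound \er{1.2} the quadratic-in-$\alpha$ term is bounded below by $\alpha^2\nu r_0^2$, while the remaining two terms grow only linearly in $\alpha$ with constants depending on $n$, $\nu$, $r_0$ and $\|b_i\|_{L^\8}$. Choosing $\alpha$ sufficiently large makes $Lv\ge 0$ throughout $A$.

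Next I run the comparison. Set $w:=u-\ve v$. Positivity of $u$ on the compact inner sphere $\pa B_{r_0/2}(y_0)\su\Omega$ gives $u\ge m>0$ there; picking $\ve>0$ so small that $\ve v\le m$ on this sphere, and noting that on the outer sphere $\pa B\su\oo$ one has $v=0$ while $u\ge 0$, we obtain $w\ge 0$ on all of $\pa A$. Since $Lw=Lu-\ve Lv\le 0$ in $A$, the weak maximum principle for elliptic operators with bounded measurable coefficients and no zero-order term gives $w\ge 0$ throughout $\ol A$.

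Finally, $u(x_0)=v(x_0)=0$, so for any interior vector $\vect{l}$ at $x_0$,
\[\liminf_{t\to 0^+}\frac{u(x_0+t\vect{l})}{t}\ge\ve\,D_{\vect{l}}v(x_0)=-2\ve\alpha e^{-\alpha r_0^2}(x_0-y_0)\cdot\vect{l}>0,\]
since the hypothesis $x_0+t\vect{l}\in B$ for $t\in(0,t_0)$ forces $(x_0-y_0)\cdot\vect{l}<0$ by strict convexity of $B$. This establishes \er{1.3}. The argument is classical; the only genuinely delicate step is the choice of $\alpha$, which is dictated entirely by the ellipticity constant $\nu$, the radius $r_0$, and the sup-norm of the drift coefficients.
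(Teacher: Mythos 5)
Your proof is correct, and it follows the same structural scheme as the paper: construct a barrier in the annulus $B_{r_0}(y_0)\sm B_{r_0/2}(y_0)$, establish that it is a subsolution, push it below $u$ on the two boundary spheres, and invoke the comparison principle. The only genuine difference is the choice of barrier. You use the classical Hopf exponential barrier $v(x)=e^{-\alpha|x-y_0|^2}-e^{-\alpha r_0^2}$ with a large parameter $\alpha$, whereas the paper, following Landis, uses the power-law barrier $u_1(x)=c_1\big(|x-y_0|^{-\lambda}-r_0^{-\lambda}\big)$ whose subsolution property is isolated in Lemma~\ref{L1.4}. Both work equally well here; the paper's choice has the advantage that the same power-law function $|x|^{-\lambda}$ is reused verbatim in the proof of Lemma~\ref{L2.3}, so it pays to establish it once. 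One small point in your favor: your direct computation of $Lv$ absorbs the drift terms $\sum b_iD_iv$ into the $O(\alpha)$ bookkeeping, so the choice of $\alpha$ transparently handles $b_i\in L^{\8}$, whereas the paper's Lemma~\ref{L1.4} treats only $b_i\eq 0$ and defers the drift case to the informal Remark~\ref{R1.5}. Your concluding step, that the interior-vector condition forces $(x_0-y_0)\cdot\vect{l}<0$ and hence $D_{\vect{l}}v(x_0)>0$, is correct and makes the passage to \er{1.3} slightly more explicit than the paper's one-line remark that $u_1$ itself satisfies \er{1.3}.
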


In a particular case when $L=\Delta$ - the Laplacian, this result was established in 1910 by M.S. Zaremba \cite{Z10}. In the beginning of 1930s, G. Giraud \cite{G33} has got a similar result for domains $\Omega$ with the boundary $\po\in C^{1,\alpha},\,0<\alpha<1$, and operators $L$ with coefficients satisfying some continuity assumptions. See bibliographical notes in \cite{PW}, Ch. 2, and \cite{GT}, Ch. 3, for early references on this subject.
\smallskip

On the other hand, it is well known (see, e.g. \cite{CH}, IV.7.3) that an \emph{exterior sphere condition} at $x_0\in\po$, together with the boundary condition $u=0$ near $x_0$, guarantees the boundedness of the ratio $u(x)/|x-x_0|$ in $\Omega$. In a ``model'' case, this property can be formulates as follows.

\begin{lemma}\label{L1.2}
     Suppose that $\Omega$ satisfies an \emph{exterior sphere condition} at a point $x_0\in\po$, i.e. there exists a ball
     $B:=B_{r_0}(y_0)$, such that $\Omega\cap B=\0$, and $x_0\in (\po)\cap (\pa B)$.
     Let $u\in C^2(\Omega)\cap C(\oo)$ satisfy $\,u>0,\;Lu\ge 0\,$ in $\Omega$, and
     \[ u=0 \etext{on} (\po)\cap(B_{\ve_0}(x_0),
     \itext{where} \ve_0=\con>0.\]
     Then
     \begin{equation}\label{1.4}
        \sup_{\Omega} \frac{u(x)}{|x-x_0|}<\8.
     \end{equation}
\end{lemma}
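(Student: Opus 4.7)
The plan is to construct an explicit barrier $w\ge 0$ satisfying $Lw\le 0$ in $\Omega$ with $w(x_0)=0$ and $w(x)\le C|x-x_0|$ near $x_0$, arrange that $w\ge u$ on the boundary of a convenient subdomain, and then conclude by the weak maximum principle.

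For the barrier I would try the classical annular function
\[
    w(x) = A\bigl(r_0^{-\alpha} - |x-y_0|^{-\alpha}\bigr), \qquad \rho := |x-y_0|,
\]
with $A,\alpha>0$ to be chosen. Since $\Omega\cap B=\0$, one has $\rho\ge r_0$ on $\oo$, so $w\ge 0$ there and $w(x_0)=0$. Differentiating, $D_iw = A\alpha\rho^{-\alpha-2}(x-y_0)_i$ and $D_{ij}w = A\alpha\rho^{-\alpha-2}\bigl[\delta_{ij}-(\alpha+2)\rho^{-2}(x-y_0)_i(x-y_0)_j\bigr]$. Applying \er{1.2} to bound $\tr(a_{ij})\le n\nu^{-1}$, $\sum a_{ij}(x-y_0)_i(x-y_0)_j\ge\nu\rho^2$, and $|\sum b_i(x-y_0)_i|\le\|b\|_\8\rho$ gives
\[
    Lw \le A\alpha\rho^{-\alpha-2}\bigl[n\nu^{-1}-(\alpha+2)\nu+\|b\|_\8\rho\bigr].
\]
Because $\rho\le r_0+\diam\Omega$ on $\oo$, fixing $\alpha$ large enough (depending only on $n,\nu,\|b\|_\8,r_0,\diam\Omega$) makes the bracket negative, so $Lw\le 0$ in $\Omega$.

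Next I would apply the weak maximum principle on $V:=\Omega\cap B_R(x_0)$ for some $R<\ve_0$ (say $R=\ve_0/2$). Since $L$ has no zeroth-order term, $L(u-w)\ge 0$ yields $\sup_V(u-w)\le\sup_{\pa V}(u-w)^+$. On $\pa\Omega\cap\ol{B_R(x_0)}$ we have $u=0\le w$. On $\oo\cap\pa B_R(x_0)$ the triangle inequality gives $|x-y_0|\le R+r_0$, hence $w\ge A\bigl(r_0^{-\alpha}-(R+r_0)^{-\alpha}\bigr)>0$, which exceeds $M:=\sup_\Omega u<\8$ once $A$ is taken large enough. Thus $u\le w$ on $\pa V$, and therefore $u\le w$ throughout $V$.

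To finish, note that $\rho-r_0=|x-y_0|-|x_0-y_0|\le|x-x_0|$ by the triangle inequality, and from $1-(1+s)^{-\alpha}\le\alpha s$ for $s\ge 0$,
\[
    w(x) = Ar_0^{-\alpha}\bigl[1-(r_0/\rho)^\alpha\bigr] \le A\alpha r_0^{-\alpha-1}(\rho-r_0) \le A\alpha r_0^{-\alpha-1}|x-x_0|.
\]
Hence $u(x)/|x-x_0|\le A\alpha r_0^{-\alpha-1}$ for $x\in V$, and $u(x)/|x-x_0|\le M/R$ on $\Omega\sm V$, which gives \er{1.4}. The only delicate step is the choice of $\alpha$: it must be large enough to dominate both the trace contribution $n\nu^{-1}$ and the drift $\|b\|_\8\rho$ simultaneously; once that is secured, the comparison-principle bookkeeping is routine.
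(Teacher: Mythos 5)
Your barrier construction and the final Lipschitz estimate for $w$ are fine, but the comparison step has a genuine gap, and the inequality you state on $\oo\cap\pa B_R(x_0)$ is in fact reversed. The triangle inequality $|x-y_0|\le R+r_0$ is an \emph{upper} bound on $\rho:=|x-y_0|$, and since $w=A\bigl(r_0^{-\alpha}-\rho^{-\alpha}\bigr)$ is increasing in $\rho$, this gives the upper bound $w\le A\bigl(r_0^{-\alpha}-(R+r_0)^{-\alpha}\bigr)$, not the lower bound you need. More fundamentally, no useful lower bound on $w$ exists on that set: the hypotheses give only $\rho\ge r_0$, and $\Omega\cap\pa B_R(x_0)$ can contain interior points of $\Omega$ with $\rho$ arbitrarily close to $r_0$ (the exterior ball $B$ may be tangent to $\po$ only at $x_0$ while $\Omega$ wraps around $\ol B$), where $w$ is as small as you please but $u$ is bounded below by a positive constant. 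So no choice of $A$ yields $w\ge u$ on $\pa V$, and the maximum principle does not apply on $V=\Omega\cap B_R(x_0)$.

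The paper sidesteps this by choosing a different comparison domain: an annulus $\Omega_2:=\Omega\cap\bigl(B_{R_0}(y_0)\sm B_{r_0}(y_0)\bigr)$ centered at $y_0$, with $R_0>r_0$ close to $r_0$, after first shrinking $B$ so that it is at positive distance from $(\po)\sm B_{\ve_0}(x_0)$. The outer boundary $\Omega\cap\pa B_{R_0}(y_0)$ is a level set of the radial barrier, so $w$ is the constant $A\bigl(r_0^{-\alpha}-R_0^{-\alpha}\bigr)>0$ there and a large $A$ dominates $\sup_\Omega u$; the inner sphere $\pa B_{r_0}(y_0)$ does not meet $\Omega$ because $\Omega$ is open and $\Omega\cap B=\0$; and the remaining piece of $\pa\Omega_2$ lies in $(\po)\cap B_{\ve_0}(x_0)$ by the choice of $R_0$, where $u=0\le w$. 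With this annular domain in place of your $V$, your barrier and your closing Lipschitz estimate $w(x)\le A\alpha r_0^{-\alpha-1}|x-x_0|$ carry over and the proof is complete.
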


The proofs of Lemmas \ref{L1.1} and \ref{L1.2} and their generalizations are usually based on the classical \emph{comparison principle} (\cite{GT}, Theorem 3.3).

\begin{theorem}[Comparison principle]\label{T1.3}
    Let $\Omega$ be a bounded open set in $\rn$, and let $u_1,u_2$ be functions in $C^2(\Omega)\cap C(\oo)$ satisfying $Lu_1\ge Lu_2$ in $\Omega$, and $u_1\le u_2$ on $\po$. Then $u_1\le u_2$ in $\Omega$.
\end{theorem}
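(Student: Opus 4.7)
The plan is to reduce to the case of a strict differential inequality by adding a suitable exponential barrier, and then rule out interior maxima by the infinitesimal second-derivative test. Since $L$ has no zeroth-order term, the conclusion $v\le 0$ cannot be read off directly from $Lv\ge 0$, and this barrier device is the only real subtlety.

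Set $v:=u_1-u_2\in C^2(\Omega)\cap C(\oo)$, so that $Lv\ge 0$ in $\Omega$ and $v\le 0$ on $\po$. Because $\Omega$ is bounded, $M:=\sup_{\oo}x_1<\8$. Introduce the barrier $\vp(x):=e^{\alpha x_1}$. A direct computation gives
\[ L\vp \;=\; \bigl(\alpha^2 a_{11}+\alpha b_1\bigr)e^{\alpha x_1} \;\ge\; \bigl(\nu\alpha^2-\|b_1\|_{L^\8}\alpha\bigr)e^{\alpha x_1}, \]
which is strictly positive throughout $\oo$ once $\alpha$ is chosen large enough in terms of $\nu$ and $\|b_1\|_{L^\8}$. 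Fix such an $\alpha$, and for each $\ve>0$ set $w_\ve:=v+\ve\vp$, so that $Lw_\ve>0$ in $\Omega$.

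Next I would argue that $w_\ve$ cannot attain its maximum over $\oo$ at an interior point $x_0\in\Omega$: at such a point $Dw_\ve(x_0)=0$ and the Hessian $D^2 w_\ve(x_0)$ is negative semidefinite. Since $(a_{ij}(x_0))$ is positive definite by \er{1.2}, a standard simultaneous-diagonalization argument yields $\sum_{i,j}a_{ij}(x_0)D_{ij}w_\ve(x_0)\le 0$, while $\sum_i b_i(x_0)D_i w_\ve(x_0)=0$, contradicting $Lw_\ve(x_0)>0$. Therefore the maximum of the continuous function $w_\ve$ on the compact set $\oo$ is attained on $\po$.

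On $\po$ we have $v\le 0$, hence $w_\ve\le\ve e^{\alpha M}$ there, and consequently
\[ v(x) \;\le\; \ve\bigl(e^{\alpha M}-\vp(x)\bigr) \itext{for every} x\in\oo. \]
Letting $\ve\to 0^+$ yields $v\le 0$ in $\Omega$, which is the desired conclusion. The only step requiring genuine attention is the barrier construction in the first paragraph, where uniform ellipticity is used to dominate the bounded drift coefficient $b_1$; everything else is the standard interior-maximum test.
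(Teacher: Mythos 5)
The paper states this comparison principle without proof, citing it as a known result (\cite{GT}, Theorem~3.3); it is a direct consequence of the weak maximum principle (\cite{GT}, Theorem~3.1) applied to $v=u_1-u_2$. Your proof reproduces exactly that standard argument, the exponential barrier $e^{\alpha x_1}$ to absorb the drift term followed by the second-derivative test at an interior maximum, and is correct.
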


We give short proofs of Lemmas \ref{L1.1} and \ref{L1.2}, which contain some elements of the proofs of our main results, Theorems \ref{T1.8}  and \ref{T1.9}. For this purpose, we need the following elementary lemma, which will also be useful later, in the proof of Lemma \ref{L2.3}.

\begin{lemma}\label{L1.4}
    The functions $v(x):=|x|^{-\lambda}$ satisfies the inequality\\
     $\sum a_{ij}D_{ij}v\ge 0$ in $\rn\sm\{0\}$, provided the constant $\lambda=\lambda(n,\nu)>0$ is large enough.
\end{lemma}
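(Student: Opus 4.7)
The plan is a direct computation in polar form, followed by an application of the two ellipticity bounds in \er{1.2}.

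First I would compute the derivatives of $v(x)=|x|^{-\lambda}$ explicitly. Writing $r=|x|$, one has
\[
D_i v = -\lambda\, r^{-\lambda-2} x_i,\qquad
D_{ij} v = -\lambda\, r^{-\lambda-2}\delta_{ij} + \lambda(\lambda+2)\, r^{-\lambda-4} x_i x_j.
\]
Substituting into $\sum a_{ij}D_{ij}v$ and factoring gives
\[
\sum_{i,j} a_{ij} D_{ij} v
= r^{-\lambda-4}\Bigl[\,\lambda(\lambda+2)\sum_{i,j} a_{ij}x_ix_j
- \lambda\, r^2 \sum_i a_{ii}\Bigr].
\]

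Next I would invoke the ellipticity condition \er{1.2}. The lower bound $\sum a_{ij}\xi_i\xi_j \ge \nu |\xi|^2$ applied to $\xi=x$ yields $\sum a_{ij}x_i x_j \ge \nu r^2$. For the trace, each eigenvalue of $(a_{ij})$ lies in $[\nu,\nu^{-1}]$, so $\sum a_{ii}=\tr(a_{ij}) \le n\nu^{-1}$. Plugging both bounds into the bracket above yields
\[
\sum_{i,j} a_{ij} D_{ij} v \ge \lambda\, r^{-\lambda-2}\Bigl[(\lambda+2)\nu - \tfrac{n}{\nu}\Bigr],
\]
which is non-negative as soon as $\lambda \ge n\nu^{-2} - 2$. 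Choosing, e.g., $\lambda = \lambda(n,\nu) := n\nu^{-2}$ makes the right-hand side $\ge 0$ on all of $\rn\sm\{0\}$, proving the lemma.

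There is no real obstacle here; the only thing to keep track of is the two-sided ellipticity bound, since one uses the \emph{lower} bound on the quadratic form $\sum a_{ij}x_i x_j$ (the ``good'' positive term coming from the radial second derivative) and the \emph{upper} bound on the trace (controlling the ``bad'' negative term coming from $-\lambda r^{-\lambda-2}\delta_{ij}$). The mechanism is exactly the standard fact that for large $\lambda$ the strong radial convexity of $|x|^{-\lambda}$ dominates its tangential concavity, uniformly over all admissible coefficient matrices.
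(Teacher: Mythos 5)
Your proof is correct and follows essentially the same computation as the paper: both factor the expression as $\lambda|x|^{-\lambda-2}\bigl[(\lambda+2)\,|x|^{-2}\sum a_{ij}x_ix_j - \tr a\bigr]$, then apply the lower ellipticity bound to the quadratic form and the upper bound to the trace to arrive at the condition $\lambda+2\ge n\nu^{-2}$. You simply spell out the intermediate derivative computations more explicitly.
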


\begin{proof}
    We have
    \[ \begin{split}
      \sum_{i,j}a_{ij}D_{ij}\big(|x|^{-\lambda}\big) &
      =\lambda |x|^{-\lambda-2}\cdot\bigg[(\lambda+2)\sum_{i,j}\frac{a_{ij}x_ix_j}{|x|^2}
      -\tr a \bigg]\\
        & \ge \lambda |x|^{-\lambda-2}\cdot \big[(\lambda+2)\nu- n\nu^{-1}\big]
        \ge 0\itext{for} x\ne 0,
    \end{split} \]
provided $\lambda>0$ and $\lambda+2\ge n\nu^{-2}$.
\end{proof}

\begin{remark}\label{R1.5}
    The previous lemma says that $L\big(|x|^{-\lambda}\big)\ge 0$ for $x\ne 0$, where $L$ is an operator in \er{1.1} with $b_i\eq 0$. One can easily adjust the proof of this lemma to the case $|b_i|\le K=\con$, with $\lambda=\lambda(n,\nu,K,\diam\Omega)>0$.
\end{remark}

\noindent
\emph{Proof of Lemma \ref{L1.1}.}
    We have $u\ge c=\con>0$ on the set $\pa B_{r_0/2}(y_0)$, which is a compact subset of $\Omega$.
    Following the argument in \S 1.3 of the book by E.M. Landis \cite{L71}, consider the function
    \[ u_1(x):=c_1\,\big(|x-y_0|^{-\lambda}-r_0^{-\lambda}\big)
    \itext{in} \Omega_1:=B_{r_0}(y_0)\sm B_{r_0/2}(y_0)
    \su\Omega, \]
    where $c_1:=(2^{\gamma}-1)^{-1}r_0^{\gamma}\,c>0$.
    Then $u_1=c\le u$ on $\pa B_{r_0/2}(y_0)$, and $u_1=0\le u$ on $\pa B_{r_0}(y_0)$, i.e. $u_1\le u$ on $\pa\Omega_1$. Moreover, $Lu_1\ge 0\ge Lu$ in $\Omega_1$. By the comparison principle, we have $u_1\le u$ in $\Omega_1$. It is easy to see that \er{1.3} holds true for the function $u_1$, hence it is also true for the given function $u$.
\hfill $\Box$
\medskip

\noindent
\emph{Proof of Lemma \ref{L1.2}.}
    We adjust the argument in \S IV.7.3 of the book by R. Courant and D. Hilbert \cite{CH}. Replacing the ball $B$ by a smaller ball if necessary, one can assume that it lies at a positive distance from $(\po)\sm B_{\ve_0}(x_0)$. Then it is possible to choose a constant $R_0>r_0$ close to $r_0$, such that the set $(\po)\cap \big(B_{R_0}(y_0)\sm B_{r_0}(y_0)\big)$ is a subset of $(\po)\cap B_{\ve_0}(x_0)$.
    Consider the function
    \[ u_2(x):=c_2\,\big(r_0^{-\lambda}-|x-y_0|^{-\lambda}\big)
    \itext{in} \Omega_2:=\Omega\cap\big(B_{R_0}(y_0)\sm B_{r_0}(y_0)\big).\]
    Here $c_2>0$ is a large enough constant, such that
    \[ u\le c_2\,\big(r_0^{-\lambda}-R_0^{-\lambda}\big)=u_2
    \itext{on} \Omega\cap \pa B_{R_0}(y_0).\]
    On the remaining part of $\po_1$, which is a subset of $(\po)\cap B_{\ve_0}(x_0)$, we have $u=0\le u_2$. This means $u\le u_2$ on $\po_2$. Moreover, $Lu\ge 0\ge Lu_2$ in $\Omega_2$.  By the comparison principle, we have $u\le u_2$ in $\Omega_2$. Since $u_2$ is a Lipschitz function on $\Omega_2$, and $u_2(x_0)=0$, the ratio
    \[ \frac{u(x)}{|x-x_0|}\le \frac{u_2(x)}{|x-x_0|}\le N=\con
    \itext{in} \Omega_2.\]
    On the complementary set $\Omega\sm \Omega_2$, the function $u\in C(\oo)$ is bounded, and $|x-x_0|\ge R_0-r_0>0$. This implies the desired estimate \er{1.4}.
\hfill $\Box$
\medskip

In the formulations of Lemmas \ref{L1.1} and \ref{L1.2}, one cannot replace an exterior or interior sphere condition by a corresponding cone condition, as the following simple example shows.

\begin{example}\label{E1.6}
    (i) Fix a constant $\theta_1\in (0,\pi/2)$ and denote
    \[ \Omega_1:=\{ x=(x_1,x_2)\in {\bb R}^2:
    \;|x|<1,\; x_2>K\cdot |x_1|\},\]
    where $K:=\cot \theta_1>0$. In the polar coordinates $x_1=\rho\sin\theta,\; x_2=\rho\cos\theta$, we have
    \[ \Omega_1:=\{0<\rho<1,\;|\theta|<\theta_1\},
    \itext{and} z:=ix_1+x_2=\rho e^{i\theta}.\]
    The function
    \[ u_1(x_1,x_2):=\real\big(z^{\gamma_1}\big)
    =\rho^{\gamma_1}\cos(\gamma_1\theta),
    \itext{where} \gamma_1:=\frac{\pi}{2\theta_1}>1,\]
    belongs to $C^{\8}(\Omega_1)\cap C(\ol{\Omega_1})$ and satisfies $\,u_1>0,\,\Delta u_1=0\,$ in $\Omega_1$, and $u_1(0)=0$.
    It is easy to see that $u_1$ does not satisfy the strict inequality \er{1.3} (we have an equality) at the point $x_0=0\in\po_1$, where $\vect{l}$ is an arbitrary interior vector to $\Omega_1$.
    \smallskip

    (ii) The set
    \[ \Omega_2:=\{ x=(x_1,x_2)\in {\bb R}^2:
    \;|x|<1,\; x_2>-K\cdot |x_1|\},\]
    can be described in a similar way with $\theta_2:=\pi-\theta_1\in (\pi/2,\pi)$ in place of $\theta_1$. The function
    \[ u_2(x_1,x_2):=\real\big(z^{\gamma_2}\big)
    =\rho^{\gamma_2}\cos(\gamma_2\theta),
    \itext{where} \gamma_2:=\frac{\pi}{2\theta_2}\in (0,1),\]
    belongs to $C^{\8}(\Omega_2)\cap C(\ol{\Omega_2})$ and satisfies $\,u_2>0,\,\Delta u_2=0\,$ in $\Omega_2$, and $u_2\eq 0$ on $(\po_2)\cap B_1(0)$. Obviously, the ratio $u_2(x)/|x|$ is unbounded on $\Omega_2$, i.e. \er{1.4} fails at the point $x_0=0\in\po_2$.
\end{example}

Now consider a more general situation, when a ball $B$ in Lemmas \ref{L1.1} and \ref{L1.2} is replaced by a body of rotation  $Q$.

\begin{definition}\label{D1.7}
    Let a constant $r_0>0$ be given, and let $\psi(r)$ be a non-negative, non-decreasing function on $[0,r_0]$, with $\psi(r_0)<r_0$.  Define
    \begin{equation}\label{1.5}
        Q:=\{x=(x',x_n)\in\rn:\;|x'|<r_0,\;0<x_n-\psi(|x'|)<r_0 \}.
    \end{equation}

    (i) We say that an open set $\Omega\su\rn$ satisfies an \emph{interior $Q$-condition} at a point $x_0\in\po$ if $\Omega$ contains a body which is congruent with $Q$ with vertex at $x_0$. This means that in an appropriate coordinate system, we have $Q\su\Omega$, and $x_0=0\in(\po)\cap(\pa Q)$.

    (ii) We say that an open set $\Omega\su\rn$ satisfies an \emph{exterior $Q$-condition} at a point $x_0\in\po$ if its complement $\,\Omega^c:=\rn\sm\ol{\Omega}\,$ satisfies an interior $Q$-condition at $x_0$.
\end{definition}

Our main results are contained in Theorems \ref{T1.8}--\ref{T1.11} below. Theorems \ref{T1.8} and \ref{T1.9} can be considered as generalizations of Lemmas \ref{L1.1} and \ref{L1.2} correspondingly, when instead of (exterior or interior) sphere conditions we impose $Q$-conditions with
    \begin{equation}\label{1.6}
        I(\psi):=\int_0^{r_0}\frac{\psi(r)\,dr}{r^2}<\8.
    \end{equation}
Without loss of generality, we assume that the coordinate system is chosen in such a way that $x_0=0\in\po$, $Q\su\Omega$ if $\Omega$ satisfies an interior $Q$-condition, and $-Q:=\{x\in\rn:\; -x\in Q\}\su \Omega^c:=\rn\sm \Omega$ if $\Omega$ satisfies an exterior $Q$-condition. Note that sphere conditions are equivalent to $Q$-condition with $\psi(r)=cr^2,\,c=\con>0$. In this case $I(\psi)<\8$ automatically. We prove Theorems \ref{T1.8} and \ref{T1.9} in Section \ref{S.3}. Another two theorems, Theorems \ref{T1.10} and \ref{T1.11},  are given here just for completeness, without proofs. They claim that the assumption $I(\psi)<\8$ is sharp: if $I(\psi)=\8$, then the estimates in Lemmas \ref{L1.1} and \ref{L1.2} fail. Example \ref{E1.6} can serve as a clear demonstration of this fact for $\psi(r)=Kr$.

In Theorems \ref{T1.8}--\ref{T1.11}, we assume that $u\in C^2(\Omega)\cap C(\oo)$ is a positive solution of the inequality $Lu\le 0$ or $Lu\ge 0$ in $\Omega$, where $Lu:=\sum a_{ij}D_{ij}u$ has the form \er{1.1}, \er{1.2}, with $b_i\eq 0$. Combining our techniques with others, especially those in the paper by O. A. Ladyzhenskaya and N. N. Ural'tseva \cite{LU88}, one can extend the results in Theorems \ref{T1.8}--\ref{T1.11} to more general operators $L$ in \er{1.1} with $b_i\in L^q,\;q>n$. We plan to do it in our subsequent work. In particular, proofs of Theorems \ref{T1.10} and \ref{T1.11} will be presented in a more general setting. On the other hand, Example \ref{E1.12} below shows that in the case $b_i\in L^n$ all the estimates under considerations fail even for flat boundary, when $\psi\eq 0$.
Here we restrict ourselves to the case $b_j\eq 0$ in order to expose our method in its ``pure'' form.

\begin{theorem}\label{T1.8}
    Suppose that $\Omega$ satisfies an interior $Q$-condition: $Q\su\Omega$, with $I(\psi)<\8$, and $0\in\po$.  Then for any function $u\in C^2(\Omega)\cap C(\oo)$ satisfying $\,u>0,\;Lu\le 0\,$ in $\Omega$, and $u(0)=0$, we have
     \begin{equation}\label{1.7}
        \liminf_{t\to 0^+}\, t^{-1} u(t\,\vect{l})>0
        \itext{for each} \vect{l}\in
        \rn_+:=\{x\in\rn:\;x_n>0\}.
    \end{equation}

     Note that from $I(\psi)<\8$ it follows that $t\vect{l}\in Q\su\Omega$ for small $t>0$ (Corollary \ref{C3.2} below), so that $u(t\vect{l})$ in \er{1.7} is well defined.
\end{theorem}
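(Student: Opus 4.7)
The plan is to adapt the comparison-principle proof of Lemma \ref{L1.1}, substituting the radial spherical barrier $c_1\bigl(|x-y_0|^{-\lambda}-r_0^{-\lambda}\bigr)$ by one adapted to the body of rotation $Q$. Since for any $\vect{l}\in\rn_+$, Corollary \ref{C3.2} guarantees $t\vect{l}\in Q\su\Omega$ for all small $t>0$, it suffices to produce a barrier $w\in C^2(Q_*)\cap C(\ol{Q_*})$ on the subdomain $Q_*:=\{x\in Q:|x'|<r_0/2,\;x_n<r_0/2\}$ (with $r_0$ shrunk if needed so that $\psi(r_0/2)<r_0/2$, possible because $\psi(r)/r\to 0$ follows from $I(\psi)<\8$) satisfying $Lw\ge 0$ in $Q_*$, $w\le u$ on $\pa Q_*$, and a linear lower bound $w(t\vect{l})\ge ct$ along each ray as $t\to 0^+$; Theorem \ref{T1.3} will then yield $u\ge w$ in $Q_*$ and hence \er{1.7}.

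The compactness preliminary uses continuity and strict positivity of $u$ in $\Omega$ to provide a constant $c_0>0$ with $u\ge c_0$ on the compact upper/lateral portion $\Gamma:=\ol{Q_*}\cap\bigl(\{x_n=r_0/2\}\cup\{|x'|=r_0/2\}\bigr)\su Q\su\Omega$. For the barrier itself, I would try the ansatz
\[ w(x)\;=\;\ve\bigl(x_n-\Phi(|x'|)\bigr)\cdot v(x), \]
where $v(x):=|x-y_0|^{-\lambda}$ with $y_0:=R\,e_n$ for $R\gg r_0$ and $\lambda=\lambda(n,\nu)$ as in Lemma \ref{L1.4} (so $Lv\ge 0$), and $\Phi:[0,r_0]\to[0,\8)$ is a nondecreasing auxiliary function with $\Phi(0)=0$, $\Phi(r)\ge\psi(r)$ on $[0,r_0]$, and $\Phi(r)/r\to 0$ as $r\to 0^+$. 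Boundary conditions: on the bottom graph $\{x_n=\psi(|x'|)\}\cap\pa Q_*$, $w\le 0\le u$ since $\Phi\ge\psi$ and $v>0$; on $\Gamma$, $w$ is bounded by a constant multiple of $\ve$, making $w\le c_0\le u$ for $\ve$ small (after enlarging $\Phi(r_0/2)\ge r_0/2$ if needed so that $w\le 0$ on the lateral part). Along the ray $x=t\vect{l}$, $w(t\vect{l})=\ve\bigl(tl_n-\Phi(t|l'|)\bigr)\cdot R^{-\lambda}(1+O(t))\sim\ve\,l_n R^{-\lambda}\,t$ as $t\to 0^+$, using $\Phi(r)/r\to 0$.

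The main obstacle is the verification that $Lw\ge 0$ in $Q_*$: by Leibniz, $Lw$ decomposes into the nonnegative piece $\ve(x_n-\Phi)Lv$, a sign-indefinite piece $-\ve v\bigl[A\Phi''+(B-A)\Phi'/r\bigr]$ (where $A:=\sum_{i,j<n}a_{ij}x_ix_j/|x'|^2$ and $B:=\sum_{i<n}a_{ii}$ are the partial ellipticity coefficients), and a mixed cross term $2\ve\sum a_{ij}D_i(x_n-\Phi)D_jv$. The cross term contains the favorable factor $D_nv=-\lambda(x_n-R)|x-y_0|^{-\lambda-2}$, which for $R\gg r_0$ dominates in magnitude all other contributions and has the correct sign to ensure $Lw\ge 0$ once $\Phi$ is calibrated; the integrability $I(\psi)<\8$ is the precise Dini-type threshold that permits a single $\Phi$ to dominate $\psi$ while remaining sublinear at $0$, closing the estimate. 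Example \ref{E1.6}(i) (the case $\psi(r)=Kr$, for which $I(\psi)=\8$) demonstrates sharpness: without the Dini condition, \er{1.7} fails even for $L=\Delta$.
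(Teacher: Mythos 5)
Your proposal replaces the paper's mechanism with an explicit barrier, which is precisely the route the paper goes out of its way to avoid. The paper's proof reduces Theorem~\ref{T1.8} to Lemma~\ref{L3.3}, where the given solution is approximated by solutions $v_k$ in enlarged domains $Q_k=Q\cup C_{r_k}$ and Bauman's comparison theorem (Theorem~\ref{T2.4}) controls the quotients $v_k/x_n$; the Dini condition enters only through the summability $\sum\theta_k<\infty$, which ensures the infima $\mu_k:=\inf_{C_{r_{k+1}}}v_k/x_n$ satisfy a stable recurrence $\mu_{k+1}\ge(1-N\theta_k)\mu_k$ with convergent product. No barrier is ever written down, and in particular no regularity or structural conditions on $\psi$ beyond monotonicity and $I(\psi)<\infty$ are needed. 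The paper's introduction explicitly contrasts this with the barrier methods of Khimchenko, Kamynin--Khimchenko, and Lieberman, which demand extra hypotheses such as $\psi''\ge 0$ or $\psi=r\psi_1$, $\psi_1'\ge 0$, $\psi_1''\le 0$, precisely because barriers involve second derivatives of the profile.

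There is also a genuine gap in your verification of $Lw\ge 0$, which you yourself flag as ``the main obstacle'' and then do not close. First, the piece you call nonnegative, $\ve(x_n-\Phi)Lv$, is not nonnegative throughout $Q_*$: since $\Phi\ge\psi$ and $Q_*$ only requires $x_n>\psi(|x'|)$, there is a strip $\psi(|x'|)<x_n<\Phi(|x'|)$ where $x_n-\Phi<0$ and this term has the wrong sign (you could fix this by comparing only on $\{x_n>\Phi(|x'|)\}$, but you do not say so). Second, and more seriously, the term $-\ve v\,(B-A)\Phi'/\rho$, with $\rho=|x'|$, blows up near $\rho=0$ for any sublinear $\Phi$ that is not $O(\rho^2)$: e.g.\ $\Phi=\rho^{1+\alpha}$, $0<\alpha<1$, gives $\Phi'/\rho\sim\rho^{\alpha-1}\to\infty$. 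The favorable cross term you invoke, coming from $D_n v=\lambda(R-x_n)|x-Re_n|^{-\lambda-2}$, is of size $\sim\ve\lambda R^{-\lambda-1}$, whereas the bad term is $\sim\ve R^{-\lambda}\Phi'/\rho$; their ratio is $\sim R\,\Phi'/\rho$, which is not small unless $\Phi'\lesssim\rho/R$, i.e.\ unless $\Phi$ is essentially $O(\rho^2)$ -- which is the sphere condition, not Dini. You would also need $-A\Phi''$ (available only if $\Phi$ is chosen concave and twice differentiable) to cancel $(B-A)\Phi'/\rho$, but for generic uniformly elliptic $a_{ij}$ the coefficients $A$ and $B-A$ can vary independently in $[\nu,\nu^{-1}]$ pointwise, and no single $\Phi$ majorizing an arbitrary Dini $\psi$ makes the sum $A\Phi''+(B-A)\Phi'/\rho$ nonpositive. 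So the calibration of $\Phi$ that you assert ``closes the estimate'' is exactly the step that historically forced the extra convexity/concavity hypotheses on $\psi$, and which the paper's quotient-comparison method circumvents entirely.
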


\begin{theorem}\label{T1.9}
    Suppose that $\Omega$ satisfies an exterior $Q$-condition: $-Q\su\Omega^c$, with $I(\psi)<\8$, and $0\in\po$.  Then for any function $u\in C^2(\Omega)\cap C(\oo)$ satisfying $\,u>0,\;Lu\ge 0\,$ in $\Omega$, and $u=0$ on $(\po)\cap B_{r_0}(0)$, we have
    \begin{equation}\label{1.8}
        M(r_0):=\sup_{\Omega\cap B_{r_0}(0)}
        \frac{u(x)}{|x|}<\8.
    \end{equation}
\end{theorem}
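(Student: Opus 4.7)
\emph{Proof plan.} Following the scheme of Lemma~\ref{L1.2}, I would construct a supersolution $v\in C^2(\Omega\cap B_{r_0}(0))\cap C\bigl(\overline{\Omega\cap B_{r_0}(0)}\bigr)$, Lipschitz at $0$ with $v(0)=0$, satisfying $v\ge u$ on $\pa(\Omega\cap B_{r_0}(0))$; the comparison principle (Theorem~\ref{T1.3}) then yields $u\le v\le N|x|$ in $\Omega\cap B_{r_0}(0)$, and the estimate on the complementary compact set where $|x|\ge r_0/2$ is trivial. The new feature compared with Lemma~\ref{L1.2} is that no ball in $-Q$ is tangent to $\pa\Omega$ at $0$, so the single barrier of Lemma~\ref{L1.4} used there must be replaced by a combination of such barriers over a family of balls exhausting $-Q$.

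Concretely, for each $s\in(0,r_0/2]$ set $y_s:=(0,\ldots,0,-s-\psi(s))$; since $\psi$ is non-decreasing, a direct calculation shows $B_s(y_s)\subset-Q\subset\Omega^c$. Applying Lemma~\ref{L1.4} at $y_s$, the function
\[
\phi_s(x):=s^{-\lambda}-|x-y_s|^{-\lambda}
\]
is a supersolution on $\rn\sm\{y_s\}$, nonnegative on $\Omega$, and using the elementary convexity bound $(1+t)^{-\lambda}\ge 1-\lambda t$ one finds $\phi_s(0)\le\lambda\,\psi(s)/s^{\lambda+1}$. My plan is to build $v$ from these barriers at dyadic scales $s_k:=2^{-k}r_0$ together with one fixed macroscopic barrier as in Lemma~\ref{L1.2}:
\[
v(x):=c_\ast\bigl(R_0^{-\lambda}-|x-Y_0|^{-\lambda}\bigr)+\sum_{k\ge 0}c_k\,\phi_{s_k}(x)\,\eta_k(x)-v(0),
\]
where $B_{R_0}(Y_0)\subset-Q$ is a fixed ball of radius $R_0\sim r_0$, $\eta_k$ is a smooth cutoff supported in a dyadic annulus of radii $\sim s_k$ around $0$, and $c_k\sim s_k^{\lambda}$ is chosen so that $\sum_k c_k\phi_{s_k}(0)\lesssim \sum_k \psi(s_k)/s_k\lesssim I(\psi)<\infty$. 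The last inequality is the standard comparison of a dyadic sum with the integral $I(\psi)$, and is the \emph{only} point where the Dini hypothesis $I(\psi)<\infty$ enters the argument. A mean value calculation on $|y_s|^{-\lambda}-|x-y_s|^{-\lambda}$, based on $|\nabla(|z|^{-\lambda})|=\lambda|z|^{-\lambda-1}$ with $|z|\sim s$, should then give $|v(x)-v(0)|\le N|x|$ in a neighborhood of $0$.

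The main obstacle is that the naive, uncut-off superposition $\int_0^{r_0/2}\phi_s(x)\,s^{\lambda-1}\,ds$ is \emph{infinite} for every $x\ne 0$---the weight $s^{\lambda-1}$ which makes the integrand behave like $\psi(s)/s^2$ at $x=0$ also makes it behave like $s^{-1}$ for $s\to 0$ with $x$ fixed---so scale-localizing cutoffs $\eta_k$ are indispensable. These cutoffs however destroy the supersolution property of each summand, producing error terms involving $\nabla\phi_{s_k}\!\cdot\!\nabla\eta_k$ and $\phi_{s_k}L\eta_k$ of indefinite sign, and they can also drive the summed increments negative on pieces of $\pa\Omega\cap B_{r_0}(0)$ near $0$. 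The technical heart of the proof, which I would work out carefully in the style of the argument used in Section~\ref{S.3} for Theorem~\ref{T1.8}, is to arrange the cutoffs $\eta_k$ to have essentially disjoint supports and to take $c_\ast$ large enough so that the macroscopic barrier $c_\ast(R_0^{-\lambda}-|x-Y_0|^{-\lambda})$---which is smooth, Lipschitz at $0$ with $v_\ast(0)=0$ after subtracting a constant, and strictly positive on $\pa\Omega\cap B_{r_0}(0)\sm\{0\}$---absorbs both the cutoff errors and the sign deficits. Once $v$ is verified to be a bona fide supersolution dominating $u$ on $\pa(\Omega\cap B_{r_0}(0))$, Theorem~\ref{T1.3} finishes the proof.
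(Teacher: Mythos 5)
Your plan takes a genuinely different route from the paper. The paper's proof (via Lemma~\ref{L3.4}) never constructs an explicit barrier: it approximates $Q^*$ by domains $Q_k^*$ with flattened boundary near the origin, defines $M_k:=\sup_{C_{r_{k+1}}}w_k/x_n$, and uses Bauman's comparison theorem (Theorem~\ref{T2.4} and Corollary~\ref{C2.7}) together with the Harnack inequality to obtain the recursion $M_{k+1}\le(1+N\theta_{k-1})M_k$ with $\theta_k=h_k/r_k$; the Dini condition then gives convergence of $\prod(1+N\theta_{k-1})$. The essential ingredient is that $x_n$ solves $Lu=0$, which makes Bauman's quotient estimate applicable. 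The paper explicitly contrasts this scheme with earlier explicit-barrier proofs (Khimchenko, Kamynin--Khimchenko, Lieberman), which required additional structural assumptions on $\psi$ beyond Dini.

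The explicit barrier you propose has a gap that your macroscopic-barrier fix cannot close. With $c_k\sim s_k^\lambda$ and $|x-y_{s_k}|\sim s_k$ on $\operatorname{supp}\eta_k$, we have $|D\phi_{s_k}|\sim s_k^{-\lambda-1}$, $|\phi_{s_k}|\lesssim s_k^{-\lambda}$, $|D\eta_k|\sim s_k^{-1}$, $|D^2\eta_k|\sim s_k^{-2}$. The cutoff error
\[
c_k\Bigl(2\sum a_{ij}D_i\phi_{s_k}D_j\eta_k+\phi_{s_k}L\eta_k\Bigr)
\]
is therefore of size $c_k\,s_k^{-\lambda-2}\sim s_k^{-2}$, which blows up as $k\to\infty$. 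The macroscopic term $c_\ast\bigl(R_0^{-\lambda}-|x-Y_0|^{-\lambda}\bigr)$ has $|L(\cdot)|\lesssim c_\ast R_0^{-\lambda-2}$, a fixed constant near the origin, so no choice of $c_\ast$ can dominate errors that grow without bound. Worse, the favorable term $c_k\,\eta_k L\phi_{s_k}$ at scale $k$, although also of order $s_k^{-2}$, is pinned to $0$ precisely where $D\eta_k$ and $D^2\eta_k$ are largest, so it cannot absorb its own scale's error; and your stipulation that the supports be ``essentially disjoint'' rules out absorbing it from the neighboring scale either. So as written the superposition is not a supersolution. To make an explicit-barrier proof go through under only $I(\psi)<\infty$ one would have to replace the dyadic cutoff sum with a genuinely global construction (e.g.\ a regularized-distance barrier), and it is precisely to avoid such constructions and the extra hypotheses they tend to impose on $\psi$ that the paper switches to the Bauman-comparison iteration.
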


The notation $M(r)$ is also used in the following
\begin{theorem}\label{T1.10}
    Suppose that $\Omega\cap B_{r_0}(0)\su Q$, with $I(\psi)=\8$, and $0\in\po$. Then for any function $u\in C^2(\Omega)\cap C(\oo)$ satisfying $\,u>0,\;Lu\ge 0\,$ in $\Omega$, and $u=0$ on $(\po)\cap B_{r_0}(0)$, we have $M(r)\to 0$ as $r\to 0^+$.
    Obviously, in this case the estimate \er{1.7} fails.
\end{theorem}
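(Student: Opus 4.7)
The plan is to establish, for every $\varepsilon>0$, that $M(r)\le\varepsilon$ for all sufficiently small $r>0$, by means of a dyadic iteration. Set $r_k:=2^{-k}r_0$ and work on the shells $\Omega_k:=\Omega\cap(B_{r_k}\setminus\overline{B_{r_{k+1}}})$. The boundary of $\Omega_k$ splits into three pieces: the outer sphere $\partial B_{r_k}\cap\Omega$, where the definition of $M$ gives $u\le M(r_k)\,r_k$; the inner sphere $\partial B_{r_{k+1}}\cap\Omega$, which will be used to read off $M(r_{k+1})$; and the lateral cusp portion $(\partial\Omega)\cap\Omega_k$, where $u=0$ by hypothesis. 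In each $\Omega_k$ I would construct a supersolution $w_k$ of $L$ that matches these boundary conditions up to an explicit damping factor.

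The key quantitative feature is the inclusion $\Omega\cap B_{r_0}\subset Q$: on $\partial B_{r_k}\cap\Omega$ one has $x_n\ge\psi(|x'|)$, so the positive values of $u$ on the outer sphere are concentrated away from the narrow neck of the cusp except on a thin set of thickness $\lesssim\psi(r_k)$ in the $x_n$-direction. The local barrier $w_k$ would be built by combining the radial function $|x|^{-\lambda}$ from Lemma \ref{L1.4} (which transfers mass between two concentric spheres at a definite geometric rate) with a linear piece, chosen so that the combination vanishes on the lateral cusp wall and is dominated by $M(r_k)\,|x|$ on the outer sphere. Quantifying how much of $M(r_k)$ must be absorbed by the cusp wall rather than transmitted to $\partial B_{r_{k+1}}$ should yield, via the comparison principle (Theorem \ref{T1.3}), the recursion
\[
M(r_{k+1})\ \le\ \bigl(1-c\,\psi(r_k)/r_k\bigr)\,M(r_k),
\]
with a constant $c=c(n,\nu)>0$ independent of $k$.

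Iterating gives $M(r_k)\le M(r_0)\prod_{j=0}^{k-1}\bigl(1-c\,\psi(r_j)/r_j\bigr)$. The dyadic form of \er{1.6} is $\sum_{j\ge 0}\psi(r_j)/r_j\asymp I(\psi)=\infty$, so the infinite product tends to zero, $M(r_k)\to 0$, and monotonicity of $M(r)$ in $r$ extends this to the continuous limit $M(r)\to 0$ as $r\to 0^+$. The main obstacle I anticipate is the clean extraction of the damping factor $1-c\psi(r_k)/r_k$ in the one-step recursion: the barrier must be tuned so that only an $O(\psi(r_k)/r_k)$ fraction of the outer-sphere bound $M(r_k)\,r_k$ is lost through the cusp, and this is where the precise geometric constraint $\Omega\cap B_{r_0}\subset Q$ is essential for controlling the thickness of the neck at scale $r_k$. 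All other steps, namely comparison via Theorem \ref{T1.3}, convergence of the infinite product, and passing from dyadic to continuous $r$, are routine.
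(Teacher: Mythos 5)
First, a caveat: the paper itself does not prove Theorem~\ref{T1.10}. It is stated ``for completeness, without proofs,'' with the proof deferred to a subsequent paper in a more general setting. So there is no internal proof to compare against, and what follows is an assessment of your proposal on its own terms and against the methods the paper does use for its proved results.

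The overall shape of your argument is the right one: work on dyadic scales $r_k$, establish a one-step contraction $M(r_{k+1})\le(1-c\,\theta_k)M(r_k)$ with $\theta_k:=\psi(r_k)/r_k$, and use the discrete form of $I(\psi)=\infty$ (Lemma~\ref{L3.1}) to force $\prod(1-c\theta_k)\to 0$, then pass from the dyadic subsequence to all $r$ by monotonicity of $M$. This mirrors, in the complementary direction, the iterations in Lemmas~\ref{L3.3} and~\ref{L3.4}, where convergence of $\sum\theta_k$ gives a bounded product $\prod(1\pm N\theta_k)$.

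However, the single step that carries all the content---the contraction $M(r_{k+1})\le(1-c\theta_k)M(r_k)$---is asserted, not established, and the barrier sketch you give does not obviously deliver it. A ``radial plus linear'' combination $a\,x_n + b\,|x|^{-\lambda}$ does not vanish on the lateral wall $\{x_n=\psi(|x'|)\}$ (there $x_n=\psi(|x'|)\ge 0$, not $0$), so the combination is not tuned to the cusp as described; you would have to subtract off a term built from $\psi$, at which point $Lw\le 0$ is no longer free. Moreover the heuristic that ``positive values of $u$ on the outer sphere are concentrated away from the narrow neck except on a thin set of thickness $\lesssim\psi(r_k)$'' is not quite what the geometry gives: on $\partial B_{r_k}\cap\Omega$ the bound $u\le M(r_k)\,r_k$ holds uniformly, with no concentration; the gain of order $\theta_k$ must instead come from comparing the domain $Q\cap B_{r_k}$ to the flat half-cylinder $C_{r_k}$ and quantifying, at scale $r_{k+1}$, the loss of mass caused by raising the bottom boundary from $\{x_n=0\}$ to $\{x_n=\psi(|x'|)\}$. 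This is exactly the kind of estimate the paper extracts not from explicit barriers but from Bauman's comparison theorem (Theorem~\ref{T2.4}, Corollaries~\ref{C2.5},~\ref{C2.7}) together with the Harnack inequality; indeed the paper emphasizes that its method ``does not use any explicit expressions for comparison functions.'' A proof in the paper's style would dualize the argument of Lemma~\ref{L3.4}: there, $M_{k+1}\le(1+N\theta_k)M_k$ with $\sum\theta_k<\infty$ gives boundedness; here, one would need the lower-order correction of the opposite sign, i.e.\ a genuine deficit $-c\theta_k M_k$, and extracting that deficit uniformly in the ellipticity constant is the nontrivial part. As written, your proposal identifies the right target but leaves that step unjustified, and the barrier route you outline would have to be substantially reworked to close it.

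Two smaller points. Since only $\Omega\cap B_{r_0}\subset Q$ is assumed, the piece of $\partial\Omega$ in $B_{r_k}$ need not lie on the cusp wall; it is cleaner to extend $u$ by zero to $Q\cap B_{r_0}$ and compare with the model solution in $Q$, reducing at once to the case $\Omega=Q$. And one must make sure the quantity being iterated is well controlled at every scale (Bauman's theorem applies on Lipschitz pieces, which $Q$ provides only away from the vertex), so an approximation by domains with flattened or truncated cusp, as in the proofs of Lemmas~\ref{L3.3}--\ref{L3.4}, would be needed to make the iteration rigorous.
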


\begin{theorem}\label{T1.11}
    Suppose that $\Omega^c\cap B_{r_0}(0)\su -Q$, with $I(\psi)=\8$, and $0\in\po$. Then for any function $u\in C^2(\Omega)\cap C(\oo)$ satisfying $\,u>0,\;Lu\le 0\,$ in $\Omega$, and $u=0$ on $(\po)\cap B_{r_0}(0)$, we have
         \begin{equation}\label{1.9}
        \liminf_{t\to 0^+}\, t^{-1} u(t\,\vect{l})=\8
        \itext{for all} \vect{l}\in \rn_+.
    \end{equation}
\end{theorem}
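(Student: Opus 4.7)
My plan is a dyadic iteration of improved Hopf-type inequalities that leverages $I(\psi)=\8$ in the limit. After a rotation I take $\vect{l}=e_n$. The exterior $Q$-condition ensures $\Omega\supset\Omega_*:=\{x\in B_{r_0}(0):\,x_n>-\psi(|x'|)\}$, so $te_n\in\Omega$ and $u(te_n)>0$ for $0<t<r_0$. Set $r_k:=2^{-k}r_0$ and $P_k:=r_ke_n$.

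The core of the argument is the \emph{improved Hopf inequality}
\[
  u(P_{k+1})/r_{k+1}\,\ge\,\bigl(1+c\,\psi(r_k)/r_k\bigr)\,u(P_k)/r_k,
\]
valid for all large $k$, with $c=c(n,\nu)>0$. I would obtain it by adapting the proof of Lemma \ref{L1.1} in the annular shell $A_k:=\Omega\cap\bigl(B_{r_k}(0)\sm B_{r_{k+2}}(0)\bigr)$: the exterior $Q$-condition lets one fit an interior ball $B_{R_k}(z_k)\su\Omega$ with $R_k\ge r_k\bigl(1+c'\psi(r_k)/r_k\bigr)$, obtained by shifting $z_k$ off the $e_n$-axis so that $B_{R_k}(z_k)$ dips into the wedge of $\Omega_*$ below the plane $x_n=0$. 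Lemma \ref{L1.4} supplies the sub-solution $v_k(x):=c_k\bigl(|x-z_k|^{-\lambda}-R_k^{-\lambda}\bigr)$; calibrating $c_k$ via an interior Harnack bound $u\ge c''u(P_k)$ on the (compact) piece of $\pa A_k$ inside $B_{R_k}(z_k)$, and noting that $v_k\le 0\le u$ outside $B_{R_k}(z_k)$ (in particular on $\po\cap A_k$, where $u=0$), the comparison principle (Theorem \ref{T1.3}) yields $v_k\le u$ in $A_k$. Direct evaluation of $v_k$ at $P_k$ and $P_{k+1}$ then produces the claimed multiplicative gain.

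Iterating the inequality and using $\log(1+x)\ge x/2$ for $x\in[0,1]$,
\[
  \log\bigl(u(P_k)/r_k\bigr)\,\ge\,\log\bigl(u(P_0)/r_0\bigr)+(c/2)\sum_{j=0}^{k-1}\psi(r_j)/r_j.
\]
Monotonicity of $\psi$ yields $\sum_{j=0}^{k-1}\psi(r_j)/r_j\ge\tfrac12\int_{r_k}^{r_0}\psi(r)\,r^{-2}\,dr\to+\8$ as $k\to\8$, since $I(\psi)=\8$. Hence $u(P_k)/r_k\to+\8$, and an interior Harnack chain between $P_k$ and $P_{k+1}$ upgrades this discrete statement to the continuous liminf $u(te_n)/t\to+\8$ as $t\to 0^+$. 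For general $\vect{l}\in\rn_+$, one further interior Harnack comparison between $t\vect{l}$ and $te_n$ at scale $t$ reduces to the axial case.

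The decisive difficulty is the per-scale barrier. A naive interior ball tangent to $\po$ at $0$ recovers only the linear factor $1/2$ in the Hopf ratio. The improvement by $1+c\psi(r_k)/r_k$ demands a ball $B_{R_k}(z_k)$ whose radius strictly exceeds $r_k$, which rests on a careful geometric analysis of how far the center $z_k$ can be pushed into the $-\psi$-wedge of $\Omega_*$ without $B_{R_k}(z_k)$ escaping $\Omega$. Example \ref{E1.6}(ii) confirms $1+c\psi(r_k)/r_k$ is the sharp factor in the scale-invariant case $\psi(r)=Kr$, where the optimal barrier degenerates to a classical conic harmonic function.
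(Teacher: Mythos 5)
The paper gives no proof of Theorem~\ref{T1.11} (it is stated ``without proofs'' and deferred to subsequent work), so I evaluate your argument on its own merits. The top-level structure (dyadic scales, a per-scale multiplicative gain $1+c\theta_k$ with $\theta_k:=\psi(r_k)/r_k$, the identification $\sum\theta_k=\8$ with $I(\psi)=\8$, and the final Harnack upgrades) is the right skeleton and matches what one expects from Lemma~\ref{L3.3} and Example~\ref{E1.6}(ii). The gap is in the single step you yourself flag as ``the decisive difficulty'': the claim that a Hopf barrier built on a slightly \emph{larger} ball produces the multiplicative gain $1+c\theta_k$.

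That mechanism points in the wrong direction. The lower bound extracted from the barrier $v_k(x)=c_k\bigl(|x-z_k|^{-\lambda}-R_k^{-\lambda}\bigr)$ is governed by its radial derivative on $\pa B_{R_k}(z_k)$, which is $\propto 1/R_k$ once $c_k$ is calibrated to a fixed level on $\pa B_{R_k/2}(z_k)$; and only the component of that gradient along $e_n$ helps when evaluating at $te_n$. Enlarging $R_k$ therefore \emph{weakens} the estimate, and pushing $z_k$ off the $e_n$-axis weakens it further (the $e_n$-component of $\nabla v_k$ at the relevant point picks up a factor $\cos\alpha<1$). Concretely, for $\psi(r)=Kr$, any ball $B_R(z)\su\Omega_*$ with $0\in\pa B_R(z)$ has $R=|z|$ and $z$ confined to the dual cone $|z'|\le K z_n$; the optimal choice is $z$ on the axis, and the best Hopf lower bound one obtains is $u(P_{k+1})/r_{k+1}\ge c_1\,u(P_k)/r_k$ with $c_1=c_1(n,\nu)\le\lambda/(2^\lambda-1)<1$ — a \emph{loss}, not a gain. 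No adjustment of $R_k$ or $z_k$ within this family of radially symmetric barriers overcomes that, because the sharp local profile is $\rho^{\gamma}$ with $\gamma<1$ (your Example~\ref{E1.6}(ii) solution), which a sphere barrier cannot emulate. There is also a secondary problem in the calibration step: if $B_{R_k}(z_k)$ dips below $\{x_n=0\}$, then $\pa A_k\cap B_{R_k}(z_k)$ contains points arbitrarily close to $\po$, where $u$ tends to $0$, so the asserted uniform bound $u\ge c''\,u(P_k)$ there cannot hold, and the comparison $v_k\le u$ on $\pa A_k$ is not justified.

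What actually produces a per-scale multiplicative \emph{gain} is the paper's machinery from Section~\ref{S.2} and the iterative domain approximations of Lemmas~\ref{L3.3}--\ref{L3.4}: replace $Q^*$ by flattened domains $Q_k^*$, set $w_k$ to be solutions with matched boundary data, and control the increments $w_{k+1}-w_k$ from \emph{below} by $c\,\theta_k\,w(0,r_k)$ using the Boundary Harnack Principle (Theorem~\ref{T2.4}, Corollary~\ref{C2.5}) to compare with the linear profile $x_n$ — exactly the dual of the upper bound \er{3.7}. That is the tool that converts the extra geometric room of size $h_k$ below $\{x_n=0\}$ into a genuine $(1+c\theta_k)$ factor. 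As written, your proposal does not supply this step, and the Hopf-barrier replacement you sketch would not yield it.
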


In 1969--1970, similar facts were  established by B.N. Khimchenko, first in the case $L=\Delta$ \cite{Kh69}, and then for general elliptic operators $L\,$ \cite{Kh70}, under the additional assumption $\psi''\ge 0$ (in these two papers, the same author's name is spelled slightly differently). Further, is a series of joint papers by L.I. Kamynin and B.N. Khimchenko (see \cite{KH80} and references therein), these results were extended to the parabolic and degenerate elliptic equations, under a different assumption $\psi(r)=r\psi_1(r)$ with $\psi'_1\ge 0,\;\psi''_1\le 0$. Each of these assumptions, as well as our assumption \er{1.6}, holds true for $\psi(r):=r^{1+\alpha},\,0<\alpha<1$, so that the above mentioned result by G. Giraud \cite{G33} for $\po\in C^{1,\alpha}$ is extended to general operators $L$ with bounded measurable coefficients. This case is also covered in the paper \cite{L85} by Gary M. Lieberman, in which $\po$ has a Dini continuous normal.

In the papers \cite{H52}, \cite{O52}, \cite{Kh69}, \cite{Kh70}, \cite{KH80}, \cite{L85}, and many others, the estimates of such kind are proved by means of special comparison functions, which are constructed in a more or less explicit form. Our method is quite different: it does not use any explicit expressions for comparison functions, and it  does not require additional assumptions on the functions $\psi(r)$ in Definition \ref{D1.7}. Instead, we use the estimates for quotients $u_2/u_1$ of positive solutions of $Lu=0$ in a Lipschitz domain $\Omega$, which vanish on a portion of $\po$. These estimates were proved by Patricia  Bauman in 1982 in her PhD thesis \cite{B82}, and published a bit later in \cite{B84}. Note that some estimates in her paper depend on the modulus of continuity of coefficients $a_{ij}$. However, it is easy to get rid of this additional dependence. In a more general parabolic case, this was done in \cite{FSY}, Theorem 4.3.

We essentially use the fact that $u(x)\eq x_n$ is a solution to the elliptic equation $Lu:=\sum a_{ij}D_{ij}u=0$; this is why we assume $b_i\eq 0$ in \er{1.1}. Note that the estimates for the quotients $u_2/u_1$ are also true for solutions to the equations in the \emph{divergence} form $Lu:=\sum D_i(a_{ij}D_ju)=0$ (see \cite{CFMS}), but they are not helpful here, because linear functions do not satisfy such equations in general, and in fact, the Hopf-Oleinik estimate \er{1.3} fails even when the boundary is flat (see \cite{GT}, Problem 3.9).

\begin{example}\label{E1.12}
    Consider the functions
    \[ u_1(x):=\frac{x_n}{|\ln|x||}
    \etext{and} u_2(x):=x_n\cdot |\ln|x||\]
    in the cylinder $Q:=\{x=(x',x_n)\in\rn:\;|x'|<1/2,\;0<x_n<1/2\}$,
    extended as $u_1=u_2=0$ on $(\pa Q)\cap \{x_n=0\}$. Then each of these two functions can be considered as a solution to the equation
    \[ \Delta u+\vect{b}\cdot Du:= \Delta u+\sum_i b_iD_iu=0
    \etext{in} Q,\]
    where the vector function $\vect{b}:=-\Delta u\cdot |Du|^{-2} Du$ satisfies
    \[ |\vect{b}| = \frac{|\Delta u|}{|Du|}
    \le \frac{\con}{|x|\cdot |\ln|x||}
    \in L^n(Q)
    \etext{for}n\ge 2.\]
    However, the left side of \er{1.7} is $0$ for $u=u_1$, and the left side of \er{1.8} is $\8$ for $u=u_2$.
\end{example}

In Section \ref{S.2}, we bring together, in a convenient form, some basic facts, including the estimated for the quotients $u_2/u_1$ of positive solutions, which are essential for our approach. Finally, in Section \ref{S.3}, we prove Theorems \ref{T1.8} and \ref{T1.9}.
\medskip

\emph{Notations}.  We use notations $N$ and $c$ for various positive constants depending only on the prescribed constants, such as $n,\,\nu$, etc., which do not depend on smoothness of coefficients $a_{ij}$. These constants may be different in different expression.
The expression $A:=B$ or $B=:A$ means ``$A=B$ by definition''.

$B_r(x_0):=\{x\in\rn:\,|x-x_0|<r\}$ is a ball of radius $r>0$ centered at $x_0\in\rn$. $\rn_+:=\{x=(x_1,\ldots,x_n)\in\rn:\,x_n>0\}$.
\medskip

\emph{Acknowledgements}. The author thanks N. V. Krylov, N. N. Ural'tseva, and H. F. Weinberger for very useful discussion of results in this paper.

\mysection{Auxiliary statements}\label{S.2}

In the rest of this paper, $Lu:=\sum a_{ij}D_{ij}u$ with $a_{ij}=a_{ji}\in L^{\8}$ satisfying the ellipticity condition \er{1.2} with a constant $\nu\in (0,1]$. Note that the results in this section are valid for more general operators $L$ in \er{1.1}, which include the lower order terms $\sum b_iD_iu$ with $b_i\in L^{\8}$. In this case, the constants $N$ and $c$ depend also on the upper bounds for $|b_i|$.

The following theorem was proved by N. V. Krylov and the author \cite{KS80}, \cite{S80} (see also \cite{K}, Theorem IV.2.8, and \cite{GT}, Corollary 9.25).

\begin{theorem}[Interior Harnack inequality]\label{T2.1}
    Let $\Omega$ be a bounded domain in $\rn$, such that the set
    \begin{equation}\label{2.1}
        \Omega^{\delta}:=\{x\in\Omega:\;\dist(x,\po)>\delta\}
    \end{equation}
is connected, where $\delta=\con>0$. Then
    \begin{equation}\label{2.2}
        \sup_{\Omega^{\delta}}u\le N\cdot \inf_{\Omega^{\delta}}u,
    \end{equation}
    with a constant $N$ depending only on $n,\nu$, and $\delta/\diam \Omega$.
\end{theorem}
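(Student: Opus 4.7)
The plan is to follow the Krylov-Safonov approach, which rests on the Alexandrov-Bakelman-Pucci (ABP) maximum principle and a measure-theoretic ``growth lemma.'' The first step is to reduce \er{2.2} to a local Harnack inequality on balls: namely, that whenever $B_{4r}(x_0)\su\Omega$ and $u\ge 0$ satisfies $Lu=0$ in $B_{4r}(x_0)$, one has $\sup_{B_r(x_0)}u\le N\inf_{B_r(x_0)}u$ with $N=N(n,\nu)$. Passing from this local form to \er{2.2} is a standard Harnack chain: any two points in the connected open set $\Omega^{\delta}$ can be linked by a chain of balls of radius $\delta/8$ lying inside $\Omega^{\delta/2}$, whose number is controlled by $\delta/\diam\Omega$, and iterating the local inequality along this chain absorbs the geometric data into the constant.

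The local Harnack inequality I would extract from two one-sided estimates. On the supersolution side ($Lu\le 0$, $u\ge 0$), I want the weak Harnack bound $\bigl(r^{-n}\int_{B_{2r}}u^{q}\bigr)^{1/q}\le N\inf_{B_r}u$ for some $q=q(n,\nu)>0$; on the subsolution side ($Lu\ge 0$, $u\ge 0$), I want the local maximum principle $\sup_{B_r}u\le N\bigl(r^{-n}\int_{B_{2r}}u^{p}\bigr)^{1/p}$ for some $p>0$. Taking $p=q$ and multiplying gives the local Harnack. Both halves are derived from the ABP estimate $\sup_\Omega v\le N\diam\Omega \cdot \|(Lv)^{-}\|_{L^n(\Gamma^+)}$, where $\Gamma^+$ is the upper contact set of $v$, applied to carefully chosen barrier-type comparison functions.

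The crux is the Krylov-Safonov growth lemma: if $Lu\le 0$, $u\ge 0$ in $B_{2r}$, and $|\{u\ge 1\}\cap B_{r/2}|\ge \mu|B_{r/2}|$ for some fixed $\mu>0$, then $u\ge c=c(n,\nu,\mu)>0$ throughout $B_{r/4}$. This is proved by applying ABP to a comparison function whose contact set is forced into the complementary region $\{u<1\}$, so that the measure hypothesis converts into a definite pointwise bound. Iterating via a Calder\'{o}n-Zygmund cube decomposition then yields the distributional estimate $|\{u>t\}\cap B_{r}|\le N t^{-\ve}\bigl(\inf_{B_{r/2}}u\bigr)^{\ve}|B_r|$ for some $\ve=\ve(n,\nu)>0$, which integrates to the weak Harnack inequality.

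I expect the main obstacle to be the growth lemma itself. Unlike divergence-form equations, where De Giorgi-Moser iteration based on energy inequalities applies, here there is no energy structure and the pointwise estimate must be extracted directly from ABP. Constructing an appropriate barrier and matching the geometry of its contact set to a Calder\'{o}n-Zygmund decomposition that respects the quadratic scaling of $L$ under dilations is the technical heart of the proof, and the step most sensitive to mere measurability (rather than continuity) of the coefficients $a_{ij}$.
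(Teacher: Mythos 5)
Your chaining argument is exactly what the paper does: reduce to the Krylov--Safonov Harnack inequality on concentric balls, then connect two arbitrary points of the connected set $\Omega^{\delta}$ by a chain of overlapping balls of radius comparable to $\delta$ whose length is controlled by $\delta/\diam\Omega$, and iterate. That is the entire content of the paper's proof: it simply cites the ball-to-ball Harnack inequality as Theorem~3.1 of \cite{S80} (equivalently \cite{KS80}), then performs the chain.

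Where you differ is that you also sketch a full re-derivation of the local Harnack inequality itself --- ABP maximum principle, the growth lemma via barrier and contact set, Calder\'{o}n--Zygmund cube decomposition, the resulting weak Harnack bound and local maximum principle, and their combination. That sketch is substantively correct and is indeed how the Krylov--Safonov theorem is proved, but it is doing far more work than the paper asks for: the paper treats the interior Harnack inequality on balls as a known black box (it is the author's own earlier result) and proves Theorem~2.1 in a few lines. Your route would be the right one if the local Harnack inequality were not available to cite; as a proof of the stated theorem given the existing literature, the chaining paragraph alone suffices, and the remaining three paragraphs, while accurate, are a proof of a different (and much harder) theorem. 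One small mismatch worth noting: the paper's chain uses points at mutual distance less than $\delta/8$ with the standard Harnack applied at scale $R=\delta$, keeping everything inside $\Omega$ rather than inside $\Omega^{\delta/2}$; your variant with balls of radius $\delta/8$ inside $\Omega^{\delta/2}$ works equally well, so this is cosmetic.
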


\begin{proof}
    In its standard form, the Harnack inequality is formulated for two concentric balls in place of $\Omega^{\delta}$ and $\Omega$, e.g. for $B_{R/8}$ and $B_R$ in \cite{S80}, Theorem 3.1. In general case, fix $x,y\in\Omega^{\delta}$, and choose a sequence $x^{(0)}=x,x^{(1)},\ldots,x^{(m)}=y$ in $\Omega^{\delta}$ such that $|x^{(k-1)}-x^{(k)}|<\delta/8$ for $k=1,2,\ldots,m$. One can do it in such a way that $m$ does  not exceed a constant $m_0$ depending only on $n$ and $\delta/\diam \Omega$. Then applying the ``standard'' Harnack inequality with $R:=\delta$, we get
    \[ u(x^{(k-1)})\le N_1 u(x^{(k)})
    \itext{for} k=1,2,\ldots,m,\]
    where $N_1=N_1(n,\nu)\ge 1$. Therefore,
    \[ u(x)=u(x^{(0)})\le N_1u(x^{(1)})\le \ldots \le N_1^m u(x^{(m)})=N_1^m u(y),\]
    and the desired estimate \er{2.2} follows with $N:=N_1^{m_0}$.
\end{proof}

The following lemma will help us to reduce the proofs of our main results for operators $Lu:=\sum a_{ij}D_{ij}u$ to the case $a_{ij}\in C^{\8}$. We can assume that $a_{ij}$ are defined on the whole space $\rn$. Consider the convolutions $a_{ij}^{\ve}:=a_{ij}*\eta^{\ve}$ with kernels $\eta^{\ve}$ such that
    \[ 0\le \eta^{\ve}\in C^{\8}(\rn),\quad
    \eta^{\ve}(x)\eq 0\itext{for} |x|\ge \ve,
    \itext{and} \int\eta^{\ve}(x)\,dx=1.\]
Then $a_{ij}^{\ve}\in C^{\8}(\rn),\; a_{ij}^{\ve}=a_{ji}^{\ve}$ satisfy \er{1.2}, and moreover,
\begin{equation}\label{2.3}
    a_{ij}^{\ve}\to a_{ij}\itext{as}\ve\to 0^+
    \itext{a.e. in}\Omega.
\end{equation}
This convergence follows from the properties of the Lebesgue sets (see \cite{St70}, Sec. I.1.8).

\begin{lemma}[Approximation lemma]\label{L2.2}
    Let $\Omega$ be a bounded open set in $\rn$ satisfying an exterior cone condition at each point $x_0\in\po$, i.e. an exterior $Q$-condition in Definition \ref{D1.7} with \begin{equation}\label{2.4}
    Q:=\{x=(x',x_n):\;|x|<r_0,\;x_n>K\cdot |x'|\}\end{equation}
    with constants $K>0$ and $r_0>0$. Let $u$ be a function in $C^2(\Omega)\cap C(\oo)$ satisfying $Lu:=\sum a_{ij}D_{ij}u\le 0$ in $\Omega$. For $\ve>0$, consider the above approximations of $a_{ij}$ by functions $a_{ij}^{\ve}\in C^{\8}$, which satisfy \er{1.2} and \er{2.2}, and let $u^{\ve}$ be a unique solution to the problem
    \begin{equation}\label{2.5}
        L^{\ve}u^{\ve}:=\sum_{i,j} a_{ij}^{\ve}D_{ij}u^{\ve}=0
        \itext{in}\Omega,\qquad
        u^{\ve}=u\etext{on}\po,
    \end{equation}
    in the class $C^{\8}(\Omega)\cap C(\oo)$. Then
    \begin{equation}\label{2.6}
        \sup_{\Omega} (u^{\ve}-u)\to 0
        \etext{as} \ve\to 0^+.
    \end{equation}

    If $Lu=0$ in $\Omega$, then $u^{\ve}\to u$ as $\,\ve\to 0^+$ uniformly on $\Omega$.
\end{lemma}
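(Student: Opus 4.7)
The plan is to combine three ingredients: solvability of the regularized problem, a uniform-in-$\ve$ boundary modulus of continuity from cone barriers, and the Alexandroff--Bakelman--Pucci (ABP) estimate combined with dominated convergence. Since $a_{ij}^\ve \in C^\infty(\rn)$ are uniformly elliptic and the exterior cone condition at every point of $\po$ produces a barrier whose parameters depend only on $n,\nu,K,r_0$ (constructed in the same spirit as the barrier in the proof of Lemma \ref{L1.2}, with an exterior cone in place of an exterior sphere), classical Perron theory together with interior Schauder bootstrap yields a unique $u^\ve \in C^\infty(\Omega)\cap C(\oo)$ solving \er{2.5}, and the maximum principle gives $\|u^\ve\|_{L^\infty(\Omega)} \le \|u\|_{L^\infty(\po)}$.

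The same cone barriers yield a modulus of continuity $\omega$, depending only on $n,\nu,K,r_0$ and the modulus of continuity of $u|_{\po}$, with $|u^\ve(x) - u(x_0)| \le \omega(|x-x_0|)$ for every $x_0 \in \po$ and $x \in \oo$, uniformly in $\ve$. Using $u^\ve|_{\po} = u|_{\po}$ and the triangle inequality, this gives the strip estimate
\[ \sup_{\{x\in\Omega\,:\,\dist(x,\po)\le\delta\}} |u^\ve(x) - u(x)| \le 2\omega(\delta), \]
again uniformly in $\ve$.

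For the interior, let $\Omega^\delta$ be as in Theorem \ref{T2.1}. On the compact set $\ol{\Omega^\delta}\su\Omega$ the derivatives $D_{ij}u$ are bounded, and since $|a_{ij}^\ve - a_{ij}|\le 2\nu^{-1}$ with $a_{ij}^\ve \to a_{ij}$ a.e. by \er{2.3}, dominated convergence gives $\|(L^\ve - L)u\|_{L^n(\Omega^\delta)} \to 0$ as $\ve \to 0^+$. From $L^\ve u^\ve = 0$ and $Lu \le 0$ one has $L^\ve(u^\ve - u) \ge -(L^\ve - L)u$ in $\Omega^\delta$, so the ABP estimate on $\Omega^\delta$ yields
\[ \sup_{\Omega^\delta}(u^\ve - u)^+ \le \sup_{\pa\Omega^\delta}(u^\ve - u)^+ + C\,\|[(L^\ve - L)u]^+\|_{L^n(\Omega^\delta)}, \]
with $C=C(n,\nu,\diam\Omega)$; the boundary term is $\le 2\omega(\delta)$ by the previous step. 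Given $\eta>0$, first choose $\delta$ with $2\omega(\delta)<\eta/2$, then $\ve$ small so the ABP remainder is $<\eta/2$; combined with the strip estimate this gives $\sup_\Omega(u^\ve - u)^+ < \eta$. Since $u^\ve - u = 0$ on $\po$, $\sup_\Omega(u^\ve - u) = \sup_\Omega(u^\ve - u)^+ \to 0$, which is \er{2.6}. When $Lu=0$ the inequality $Lu\ge 0$ is also available, and running the same argument with signs reversed gives $\sup_\Omega(u-u^\ve)^+\to 0$, hence uniform convergence.

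The delicate point is producing the uniform-in-$\ve$ boundary modulus: the cone-barrier construction must be arranged so that all constants depend only on $n,\nu,K,r_0$ and not on any smoothness of $a_{ij}^\ve$. This is why the hypothesis is an exterior cone condition rather than just barrier regularity of $\po$. Once such a barrier is in hand, ABP is itself insensitive to the modulus of continuity of the leading coefficients, and dominated convergence on $\ol{\Omega^\delta}$ closes the argument.
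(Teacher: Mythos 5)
Your proof is correct and follows essentially the same route as the paper's: a uniform-in-$\ve$ boundary modulus of continuity from cone barriers (the paper cites Miller \cite{M67} for precisely this), then the Aleksandrov--Bakelman--Pucci estimate on $\Omega^\delta$ applied to $u^\ve-u$ with the right-hand side $(L-L^\ve)u$ controlled by dominated convergence, and finally a $\delta$-then-$\ve$ limiting argument. The minor stylistic differences (stating the ABP inequality with positive parts, and replacing the paper's application of \er{2.6} to $\pm u$ by running the sign-reversed argument directly) do not change the substance.
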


Note that the existence of a solution $u^{\ve}\in C^{\8}(\Omega)\cap C(\oo)$ to the problem \er{2.5} (under an exterior cone condition) follows from the results by K. Miller \cite{M67}.

\begin{proof}
    From the arguments in the proof of Theorem 3 in \cite{M67} it follows that
        \[\sup_{\Omega\cap B_{\delta}(x_0)}
        |u^{\ve}(x)-u(x_0)|
        \le \omega(\delta)\to 0
        \itext{as} \delta\to 0^+,\]
    uniformly with respect to $x_0\in\po$ and $\ve>0$. Since $u\in C(\oo)$, this property also holds true for $u(x)$ in place of $u^{\ve}(x)$. By the triangle inequality, we get
    \begin{equation}\label{2.7}
        \sup_{\Omega\sm\Omega^{\delta}} |u^{\ve}-u|\le 2\omega(\delta),
    \end{equation}
    where $\Omega^{\delta}$ is defined in \er{2.1}. Moreover, since $L^{\ve}u^{\ve}=0\ge Lu$, we also have
    \[ L^{\ve}(u^{\ve}-u)\ge f^{\ve}:=(L-L^{\ve})u
    :=\sum_{i,j}(a_{ij}-a_{ij}^{\ve})D_{ij}u.\]
    Now we can use the A.D. Aleksandrov type estimate (see \cite{A67} or \cite{GT}, Theorem 9.1):
    \[\sup_{\Omega^{\delta}} (u^{\ve}-u)
    \le \sup_{\po^{\delta}} (u^{\ve}-u)
    +N\cdot ||f^{\ve}||_{L^n(\Omega^{\delta})}.\]
    By virtue of \er{2.7}, this yields
    \[ \sup_{\Omega} (u^{\ve}-u)
    \le 2\omega(\delta)+N\cdot ||f^{\ve}||_{L^n(\Omega^{\delta})}.\]
    Since $D_{ij}u$ are bounded on $\Omega^{\delta}$, and $a_{ij}^{\ve}\to a_{ij}$ a.e., the last term converges to $0$ as $\ve\to 0^+$. Hence
    \[ 0\le \limsup_{\ve\to 0^+}\,\sup_{\Omega} (u^{\ve}-u)
    \le 2\omega(\delta).\]
    The desired property \er{2.6} follows by sending $\delta$ to $0$.

    In the case $Lu=0$, we can apply \er{2.6} to both functions $u$ and $-u$, which gives the uniform convergence of $u^{\ve}$ to $u$ on $\Omega$.
\end{proof}

We also need a lower estimate for positive supersolutions in $\Omega$, which are strictly positive on a Lipschitz  portion of the boundary $\po$. For the proof of this estimate, it is convenient to replace the Lipschitz property of $\po$ by a weaker assumption \er{2.9} below.

\begin{lemma}\label{L2.3}
    Let $\Omega$ be a bounded domain in $\rn$, and let $u\in C^2(\Omega)\cap C(\oo)$ satisfy $u>0,\,Lu\le 0$ in $\Omega$. Suppose that
    \begin{equation}\label{2.8}
        u\ge\mu=\con\etext{on} (\po)\cap B_{r_0}(x_0),
    \end{equation}
     where $x_0\in\po$ and $r_0>0$ is a given constant. Moreover, let $\,\delta>0$ be a constant such that the set $\,\Omega^{\delta}$ in \er{2.1} is connected, and there are balls
    \begin{equation}\label{2.9}
        B_{\delta}(y_0)\su \Omega^c\cap B_{r_0/2}(x_0)
        \itext{and}
        B_{\delta}(z_0)\su \Omega\cap B_{r_0/2}(x_0).
    \end{equation}
    Then
    \begin{equation}\label{2.10}
        u\ge c\mu\etext{in} \Omega^{\delta},
        \etext{where} c=c(n,\nu,\delta/\diam\Omega)>0.
    \end{equation}
\end{lemma}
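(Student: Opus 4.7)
The plan is to combine a Landis-type barrier based on Lemma \ref{L1.4} with the interior Harnack inequality of Theorem \ref{T2.1}: the barrier produces a pointwise lower bound, and Harnack propagates it.

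Since $B_\delta(z_0)\subset\Omega$ gives $\dist(z_0,\po)\ge\delta$, the point $z_0$ lies in $\Omega^\delta$ (after an innocuous shrinking of $\delta$). Because $\Omega^\delta$ is connected, Theorem \ref{T2.1} reduces \er{2.10} to exhibiting a single $x^\ast\in\Omega^\delta$ with $u(x^\ast)\ge c'\mu$, where $c'$ is of the allowed form.

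For the barrier, use the exterior-ball center $y_0$. By Lemma \ref{L1.4} (and Remark \ref{R1.5} to accommodate lower-order terms) pick $\lambda=\lambda(n,\nu)>0$ with $L\bigl(|x-y_0|^{-\lambda}\bigr)\ge 0$ off $\{y_0\}$. Set $R:=r_0/2+\delta$, so that $B_R(y_0)\subset B_{r_0}(x_0)$ (as $|y_0-x_0|\le r_0/2-\delta$), and define
\[
w(x):=c_1\bigl(|x-y_0|^{-\lambda}-R^{-\lambda}\bigr),\qquad c_1:=\mu\bigl(\delta^{-\lambda}-R^{-\lambda}\bigr)^{-1},
\]
so that $w=\mu$ on $|x-y_0|=\delta$ and $w=0$ on $|x-y_0|=R$. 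On $D:=\Omega\cap B_R(y_0)$: the condition $B_\delta(y_0)\subset\Omega^c$ forces $|x-y_0|\ge\delta$ for every $x\in\po$, so $w\le\mu$ there; combined with $\po\cap\overline{B_R(y_0)}\subset\po\cap B_{r_0}(x_0)$ and \er{2.8}, this gives $w\le u$ on $\po\cap\overline{B_R(y_0)}$. Also $w=0\le u$ on $\Omega\cap\pa B_R(y_0)$, and $Lw\ge 0\ge Lu$ in $D$. The comparison principle (Theorem \ref{T1.3}) then yields $w\le u$ throughout $D$.

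To finish, select $x^\ast\in D\cap\Omega^\delta$ with $|x^\ast-y_0|$ of order $\delta$, so that the explicit formula for $w$ gives $w(x^\ast)\ge c'\mu$ and hence $u(x^\ast)\ge c'\mu$; the Harnack reduction then yields \er{2.10}. The main technical difficulty is exactly this geometric selection: $x^\ast$ must sit both inside the barrier region $D$ (forcing $|x^\ast-y_0|<R$) and at distance at least $\delta$ from $\po$. When $\delta$ is comparable to $r_0$ one takes $x^\ast=z_0$ directly; in the general case one uses the interior ball $B_\delta(z_0)$ to locate an intermediate point of $D\cap\Omega^\delta$, and transports any resulting bound back to $z_0$ via a Harnack chain in $\Omega^\delta$ of length controlled by $\diam\Omega/\delta$, which is exactly the dependence built into the constant $c$.
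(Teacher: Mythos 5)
Your high-level plan is reasonable --- the Harnack reduction in the final step is exactly right, and so is the use of an exterior-ball barrier $|x-y_0|^{-\lambda}$ with Lemma~\ref{L1.4} --- but there is a genuine gap in producing the point $x^\ast$. After the comparison principle you have $u\ge w$ on $D:=\Omega\cap B_R(y_0)$ with $R:=r_0/2+\delta$, and you then need a point $x^\ast\in D\cap\Omega^\delta$ at distance from $y_0$ bounded well away from $R$. Such a point need not exist. From \er{2.9} one only knows $|z_0-y_0|\le r_0-2\delta$, which exceeds $R$ as soon as $\delta<r_0/6$, so $z_0$ may lie outside $D$; and $\Omega\cap B_R(y_0)$ can consist entirely of ``thin'' pieces of $\Omega$ of width less than $2\delta$, in which case $\Omega^\delta$ simply does not meet $B_R(y_0)$ at all (and the connectedness of $\Omega^\delta$ does not save you, since $\Omega^\delta$ can be connected while staying outside $B_R(y_0)$). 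The vague step ``use the interior ball $B_\delta(z_0)$ to locate an intermediate point of $D\cap\Omega^\delta$'' is precisely where the argument breaks: there is no mechanism producing such a point, and in the scenario just described none exists. The single large barrier therefore only yields a lower bound on a region that may be disjoint from $\Omega^\delta$.

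The paper's proof avoids this by replacing the one large barrier with a \emph{chain} of $\delta$-scale barriers along the segment $[y_0,z_0]$: on each annulus $\Omega\cap\big(B_{3\delta}(y_k)\sm B_{\delta}(y_k)\big)$ it compares $u$ with a small comparison function, using the boundary data \er{2.8} on $\po\cap B_{3\delta}(y_k)$ (possible because $B_{3\delta}(y_k)\su B_{r_0}(x_0)$) together with the inductive bound $u\ge\theta^k\mu$ on $\Omega\cap B_\delta(y_k)$. Crucially, the induction remains valid even when $\Omega\cap B_\delta(y_k)=\0$, so the bound is transported through thin parts of $\Omega$ where $\Omega^\delta$ is absent, losing only a fixed factor $\theta=\theta(n,\nu)\in(0,1)$ per step. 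This is what your argument is missing: a way to pass the boundary information across the possibly narrow gap between $y_0$ and the region where $\Omega^\delta$ actually lives. If you want to salvage your write-up, replace the one barrier on $D$ by this iterated barrier along $[y_0,z_0]$; the subsequent Harnack chain in $\Omega^\delta$ from $z_0$ is then exactly the paper's Step~2 and you already have it.
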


\begin{proof} \emph{Step 1.} From \er{2.9} it follows that $\delta\le r_0/4$, and the balls $B_{3\delta}(y_0)$ and $B_{3\delta}(z_0)$ are contained in $B_{r_0}(x_0)$. Therefore, same is true for $B_{3\delta}(y)$, and by \er{2.8}, $u\ge\mu$ on $(\po)\cap B_{3\delta}(y)$ for each $y$ in the segment $[y_0,z_0]$.

Next, choose a sequence of points $y_0,y_1,\ldots,y_m=z_0$ in $[y_0,z_0]$, such that $|y_{k+1}-y_k|\le \delta$ for all $k=0,1,\ldots,m-1$. Obviously, we can assume that $m$ does not exceed a constant $m_1$ depending only on $\delta/\diam\Omega$. We claim that
\begin{equation}\label{2.11}
    u\ge\theta^k\mu\itext{in}\Omega\cap B_{\delta}(y_k)
    \itext{for}k=0,1,\ldots,m,
\end{equation}
with a constant $\theta=\theta(n,\nu)\in (0,1)$, to be specified later. Here we impose a natural agreement that \er{2.11} is true automatically if $\Omega\cap B_{\delta}(y_k)$ is empty, which is the case if $k=0$. In order to use  induction, we only need to prove \er{2.11} with $k+1$ in place of $k$, based on the assumption that it is true for some $k<m$. For this purpose, we compare the function $u(x)$ with
    \[ v_k(x):=\theta^k\mu\cdot
    \frac{|x-y_k|^{-\gamma}-(3\delta)^{-\gamma}}
    {\delta^{-\gamma}-(3\delta)^{-\gamma}}
    \itext{in}
    \Omega_k:=\Omega\cap
    \big(B_{3\delta}(y_k)\sm B_{\delta}(y_k)\big),\]
where $\gamma=\gamma(n,\nu)>0$ is a constant in Lemma \ref{L1.4}. Of course, we can skip this part if $\Omega_k$ is empty. By this lemma, $Lv_k\ge 0\ge Lu$ in $\Omega_k$. Moreover, \er{2.11} implies $u\ge\theta^k\mu=v_k$ on $(\po_k)\cap \pa B_{\delta}(y_k)$. We also have $u\ge 0=v_k$ on $(\po_k)\cap \pa B_{3\delta}(y_k)$, and by \er{2.8}, $u\ge\mu\ge v_k$ on the remaining part of $\po_k$. By the comparison principle, $u\ge v_k$ in $\Omega_k$. Together with \er{2.11}, this gives us
    \[u\ge \theta^{k+1}\mu\itext{in}
    \Omega\cap B_{2\delta}(y_k),\itext{if}
    \theta:=\frac{(3/2)^{\gamma}-1}{3^{\gamma}-1}\in (0,1).\]
Finally, $|y_{k+1}-y_k|\le\delta$ implies that the set $\Omega\cap B_{\delta}(y_{k+1})$ is contained in $\Omega\cap B_{2\delta}(y_k)$, so that the inequality in \er{2.11} holds true for $k+1$. By induction, the proof of \er{2.11} is complete. In particular, taking $k=m\le m_1$, we get
    \begin{equation}\label{2.12}
    u\ge  c_1\mu\itext{on} B_{\delta}(z_0),
    \itext{where} c_1:=\theta^{m_1}>0.
    \end{equation}

\emph{Step 2.} For an arbitrary point $z\in \Omega^{\delta}$, and choose a sequence of points $z_0,z_1,\ldots,z_m=z$ in $\Omega^{\delta}$, such that
$|z_{k+1}-z_k|\le \delta_1:=\delta /3$ for all $k=0,1,\ldots,m-1$. Here we can assume that
$m\le m_2=m_2(n,\delta/\diam\Omega)$. Similarly to \er{2.11}, with $z_k$ in place of $y_k$ and $\delta_1$ in place of $\delta$, and some simplifications because of the property $B_{3\delta_1}(z_k)=B_{\delta}(z_k)\su\Omega$, we obtain
    \[ u\ge\theta^k c_1\mu\itext{in}B_{\delta_1}(z_k)
    \itext{for}k=0,1,\ldots,m.\]
In particular,
$u(z)=u(z_m)\ge \theta^m c_1\mu \ge \theta^{m_2} c_1\mu$. Since $z$ is an arbitrary point in $\Omega^{\delta}$, the desired estimate \er{2.10} is proved with $c:=\theta^{m_2} c_1=\theta^{m_1+m_2}$.
\end{proof}

The following theorem, which is due P. Bauman (see \cite{B84}, Theorem 2.1), is the main tool in our approach.

\begin{theorem}[Comparison theorem]\label{T2.4}
    Let $\vp$ be a Lipschitz continuous function on ${\bb R}^{n-1}$:
    \[ |\vp(x')-\vp(y')|\le K\cdot |x'-y'|\etext{for all} x',y'\in {\bb R}^{n-1},\]
    with $K=\con\ge 0$, and $\vp(0)=0$. For $\,r>0$, define
    \[\Omega_r:=\{x=(x',x_n)\in\rn:\;|x'|<r,\;0<x_n-\vp(x')<r\},\]
    and $\Gamma_r:=(\pa \Omega_r)\cap\{x_n=\vp(x')\}$. Let $u,v$ be functions in $C^2(\Omega_{2r})\cap C(\ol{\Omega_{2r}})$ satisfying
    \[ u>0,\;v>0,\quad Lu=Lv=0\etext{in} \Omega_{2r},\]
    and $u=v=0$ on $\Gamma_{2r}$. Then
    \begin{equation}\label{2.13}
        \sup_{\Omega_r}\frac{u}{v}\le N\cdot \frac{u(0,r)}{v(0,r)},
        \etext{where} N=N(n,\nu,K)>0.
    \end{equation}
\end{theorem}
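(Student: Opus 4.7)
The plan is to establish \er{2.13} by combining three main ingredients: a Carleson-type boundary upper bound for $u$, a Harnack-chain lower bound for $v$, and the comparison principle (Theorem \ref{T1.3}), applied after smoothing the coefficients via Lemma \ref{L2.2} so that $u,v\in C^{\8}(\Omega_{2r})$. Denote by $A:=(0,r)$ the ``corkscrew'' reference point; the $K$-Lipschitz condition on $\vp$ ensures $\dist(A,\Gamma_{2r})\ge r/\sqrt{1+K^2}$.

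The technical heart is the following Carleson boundary estimate: any positive $L$-solution $w\in C^2(\Omega_{2r})\cap C(\ol{\Omega_{2r}})$ with $w=0$ on $\Gamma_{2r}$ satisfies
\[
\sup_{\Omega_r} w \le N_1\, w(A), \qquad N_1=N_1(n,\nu,K).
\]
My approach would combine Theorem \ref{T2.1} with a Whitney-type Harnack chain: each $x\in\Omega_{2r}$ admits a ball $B_{\rho(x)}(x)\su\Omega_{2r}$ with $\rho(x)\sim d(x)/\sqrt{1+K^2}$, where $d(x):=x_n-\vp(x')$. Chaining such balls from $x_0\in\Omega_r$ toward $A$ by stepping in the $e_n$-direction multiplies $d$ by a fixed factor $>1$ per step, reaching the ``safety zone'' $\{d\ge r/4\}$ in $O(\log(r/d(x_0)))$ steps; a finite chain then completes the path to $A$. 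The bare iteration produces a constant growing like $\log(r/d(x_0))$, which is collapsed into a uniform bound by an Alexandrov--Bakelman--Pucci argument on the super-level set $\{w>M\,w(A)\}$: the vanishing of $w$ on the large portion $\Gamma_{2r}$ forces this set to shrink for $M$ large, yielding the desired uniform estimate.

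Given the Carleson estimate, I would separately prove the matching lower bound $v(x)\ge c_1\,v(A)$ for $x\in\Omega_{3r/2}$ with $d(x)\ge r/4$, with $c_1=c_1(n,\nu,K)>0$. This follows from Theorem \ref{T2.1} and Lemma \ref{L2.3} via a Harnack chain in the connected set $\{d\ge r/4\}\cap\Omega_{3r/2}$, whose diameter is $\sim r$. Setting $C:=N_1/c_1$ and considering $\Phi:=u-C\,(u(A)/v(A))\,v$ on $\ol{\Omega_r}$, we have $L\Phi=0$ in $\Omega_r$ and $\Phi=0$ on $\Gamma_r$. The two bounds give $\Phi\le 0$ on the portion of $\pa\Omega_r\sm\Gamma_r$ with $d\ge r/4$ (top and outer sides of $\Omega_r$); the thin corner $\{|x'|=r,\,d<r/4\}$ is handled by iterating the same argument on dyadic scales $r/2^k$, with constants remaining uniform across scales. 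Theorem \ref{T1.3} then gives $\Phi\le 0$ throughout $\Omega_r$, i.e.\ \er{2.13}.

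The main obstacle is the Carleson boundary estimate. A naive Harnack chain accumulates an error growing like $\log(r/d(x_0))$, which diverges as $x_0\to\Gamma_{2r}$; converting this into a uniform bound requires a global argument via the Alexandrov--Bakelman--Pucci maximum principle on super-level sets, exploiting the vanishing of $w$ on a substantial part of the boundary. This step is the nontrivial technical core of Bauman's paper \cite{B84}.
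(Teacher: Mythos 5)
The paper does not actually prove Theorem~\ref{T2.4}: it is cited as P.~Bauman's result (Theorem~2.1 of \cite{B84}), with Remark~\ref{R2.6} devoted only to converting Bauman's statement on $\Omega_{8r}$ to the stated one on $\Omega_r$, and with a separate remark in Section~\ref{S.1} that the dependence on the modulus of continuity of $a_{ij}$ present in \cite{B84} must be removed (a point addressed in \cite{FSY}). Your proposal is therefore not an alternative route through the same material but an attempt at a from-scratch proof of the boundary Harnack principle for non-divergence form operators, and as such it contains two genuine gaps.

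First, the Carleson estimate $\sup_{\Omega_r}w\le N_1\,w(A)$ is asserted, with a plausible sketch (Harnack chain plus an ABP argument on super-level sets), but not proved; you yourself identify it as ``the nontrivial technical core of Bauman's paper,'' which effectively reduces the proposal to the same citation the paper makes. Second, and more seriously, the final comparison step is not sound as described. Carleson for $u$ together with the Harnack-chain lower bound for $v$ give $\Phi:=u-C\,(u(A)/v(A))\,v\le 0$ only on the part of $\pa\Omega_r\sm\Gamma_r$ where $d(x)\ge r/4$; on the remaining thin strip $\{|x'|=r,\;d(x)<r/4\}$ you have no control over $v$ from below, and the proposal to ``iterate the same argument on dyadic scales'' is circular --- you would be invoking the boundary Harnack inequality at a smaller scale in order to prove it at the current scale. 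This is not a minor technicality: it is precisely the point where a Carleson bound alone is insufficient, and the known proofs overcome it by a different device. In \cite{CFMS} (divergence form) one compares $u$ and $v$ separately to the $L$-harmonic measure of a surface ball and divides; in \cite{B84} (the non-divergence case actually at issue here) Bauman develops a theory of adjoint solutions and normalized adjoint Green functions to play the same role. Your sketch omits any counterpart of this step, so the proof as written does not close. If you intend to cite Bauman for the full boundary Harnack principle (as the paper does), the sketch is unnecessary; if you intend to prove it, you need the harmonic-measure or adjoint-solution machinery, not just Carleson plus the maximum principle.
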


\begin{corollary}\label{C2.5}
    Under the assumptions of the previous theorem, we also have
    \begin{equation}\label{2.14}
    \frac{u(0,r)}{v(0,r)}\le N\cdot
    \inf_{\Omega_r}\frac{u}{v},
    \etext{where} N=N(n,\nu,K)>0.
    \end{equation}
\end{corollary}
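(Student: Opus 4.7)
The plan is to observe that Theorem \ref{T2.4} is symmetric in $u$ and $v$: both hypotheses ($u,v>0$, $Lu=Lv=0$ in $\Omega_{2r}$, $u=v=0$ on $\Gamma_{2r}$) are invariant under interchanging $u$ and $v$. So the corollary should follow simply by applying Theorem \ref{T2.4} with the roles of $u$ and $v$ swapped, and then reciprocating.

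More precisely, I would apply \er{2.13} with the pair $(v,u)$ in place of $(u,v)$, which gives
\[
    \sup_{\Omega_r}\frac{v}{u}\le N\cdot\frac{v(0,r)}{u(0,r)},
\]
with the same constant $N=N(n,\nu,K)$ (since the constant depends only on those parameters, not on the particular solutions). Using the elementary identity
\[
    \sup_{\Omega_r}\frac{v}{u}=\left(\inf_{\Omega_r}\frac{u}{v}\right)^{-1},
\]
valid because $u,v>0$ throughout $\Omega_r$, and rearranging yields \er{2.14} with the same constant $N$.

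There is no real obstacle here; the only thing to check is that the hypotheses of Theorem \ref{T2.4} are genuinely symmetric in the two solutions, which they are. Thus the corollary is essentially a direct consequence of the comparison theorem applied in both orderings.
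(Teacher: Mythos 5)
Your proof is correct and is exactly the paper's argument: interchange $u$ and $v$ in \eqref{2.13}, then use the identity $\inf(u/v)=\big(\sup(v/u)\big)^{-1}$ for positive functions. Nothing further to add.
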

\begin{proof}
    Obviously, we can interchange $u$ and $v$ in \er{2.13}, and then \er{2.14} follows from an elementary relation  $\inf(u/v)=\big(\sup(v/u)\big)^{-1}$.
\end{proof}

\begin{remark}\label{R2.6}
    In \cite{B84}, this theorem was proved with $\Omega_{8r},\,\Gamma_{8r}$ in place of $\Omega_r,\,\Gamma_r$ correspondingly. In order to apply this fact to the proof of \er{2.13}, consider separately each of two possible cases for $x=(x',x_n)\in\Omega_r$: (i) $x_n-\vp(x')<r/8$ and (ii) $x_n-\vp(x')\ge r/8$. In the case (i), from \cite{B84}, after obvious change of notations, it follows
    \[ \frac{u(x)}{v(x)}\le N_1(n,\nu,K)\cdot \frac{u(x',\vp(x')+r/8)}{v(x',\vp(x')+r/8)},\]
    and then by the Harnack inequality, Theorem \ref{2.1},
    \[ \frac{u(x',\vp(x')+r/8)}{v(x',\vp(x')+r/8)}\le  N_2(n,\nu,K)\cdot \frac{u(0,r)}{v(0,r)},\]
    so that $u/v\,(x)\le N\cdot u/v(0,r)$ with $N:=N_1N_2$. In the case (ii), we get this estimate with $N:=N_2$ by the Harnack inequality directly. Therefore, \er{2.13} holds true.

    The above argument also shows that in the formulation of Theorem \ref{2.4}, one can replace $2r$ by $cr$ with any absolute constant $c>1$. We will use this observation with $c=3/2$ in order to get the estimate \er{2.15} below.
\end{remark}

\begin{corollary}\label{C2.7}
    The estimate \er{2.13} in Theorem \ref{2.4} remains valid if the condition $v=0$ on $\Gamma_{2r}$ is omitted.
\end{corollary}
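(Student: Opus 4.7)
The plan is to replace $v$ by a carefully chosen auxiliary function $\tilde v$ that vanishes on the bottom portion of a slightly smaller domain, so that Theorem \ref{T2.4} applies directly to the pair $(u,\tilde v)$, and then to verify that $\tilde v(0,r)$ is not much smaller than $v(0,r)$.

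Concretely, let $\tilde v$ be the $L$-harmonic function in $\Omega_{3r/2}$ with $\tilde v = v$ on $(\pa\Omega_{3r/2})\sm\Gamma_{3r/2}$ and $\tilde v = 0$ on $\Gamma_{3r/2}$; existence in $C^2(\Omega_{3r/2})\cap C(\ol{\Omega_{3r/2}})$ is obtained by first approximating $a_{ij}$ with smooth coefficients as in Lemma \ref{L2.2}. The comparison principle gives $0 \le \tilde v \le v$ in $\Omega_{3r/2}$, and the strong maximum principle gives $\tilde v > 0$ there. Since $u,\tilde v > 0$, $Lu = L\tilde v = 0$ in $\Omega_{3r/2}$, and $u = \tilde v = 0$ on $\Gamma_{3r/2}$, Theorem \ref{T2.4} (with $2r$ replaced by $3r/2$, as allowed by Remark \ref{R2.6}) yields
\[ \sup_{\Omega_r}\frac{u}{v} \;\le\; \sup_{\Omega_r}\frac{u}{\tilde v} \;\le\; N\,\frac{u(0,r)}{\tilde v(0,r)}, \]
and the proof reduces to establishing a lower bound $\tilde v(0,r) \ge c\, v(0,r)$ with $c = c(n,\nu,K) > 0$.

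For this bound, consider a small cap $C := (\pa\Omega_{3r/2}) \cap B_{\rho}(0, 3r/2)$ on the top of $\Omega_{3r/2}$, with $\rho = \rho(K)\cdot r$ chosen small enough that $C$ is compactly contained in $\Omega_{2r}$. A Harnack chain in $\Omega_{2r}$ from $(0,r)$ to points of $C$ — lying in a common interior subset $\Omega_{2r}^{\delta_0}$ with $\delta_0 = \delta_0(K)\cdot r$ — together with Theorem \ref{T2.1} gives $v \ge c_0\, v(0,r)$ on $C$; hence $\tilde v = v \ge c_0\, v(0,r)$ on $C$. Lemma \ref{L2.3} applied to $\tilde v$ in $\Omega_{3r/2}$, with $x_0 = (0,3r/2)$, $r_0 = \rho$, $\mu = c_0 v(0,r)$, and with the interior/exterior balls placed slightly below/above $x_0$ (which suffices for the mild condition \er{2.9}), then propagates this boundary lower bound inward: $\tilde v \ge c\, v(0,r)$ on $\Omega_{3r/2}^\delta$ for some $\delta = \delta(n,\nu,K)\cdot r > 0$. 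Taking $\delta < r/2$ ensures $(0,r) \in \Omega_{3r/2}^\delta$, giving the required bound.

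The main obstacle is precisely this lower estimate on $\tilde v(0,r)$: a priori, $v$ might be very small or vanish on parts of $\Gamma_{2r}$, and a careless choice of auxiliary function could make $\tilde v$ vanish or be much smaller than $v$ at interior points. The device of shrinking from $\Omega_{2r}$ to $\Omega_{3r/2}$ is what makes the argument work, since it places $(\pa\Omega_{3r/2})\sm\Gamma_{3r/2}$ strictly inside $\Omega_{2r}$, where the interior Harnack inequality quantitatively controls $v$ in terms of $v(0,r)$; Lemma \ref{L2.3} then carries this control down to the reference point.
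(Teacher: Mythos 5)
Your argument follows essentially the same route as the paper's own proof: shrink to $\Omega_{3r/2}$, construct an $L$-harmonic auxiliary function that vanishes on the bottom, lies below $v$, so Theorem \ref{T2.4} applies to the pair, and then recover a lower bound on the auxiliary function at $(0,r)$ from a Harnack estimate near the top combined with Lemma \ref{L2.3}. The conceptual content and the constants' dependence are identical.

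One small technical slip: you prescribe $\tilde v = v$ on all of $(\pa\Omega_{3r/2})\setminus\Gamma_{3r/2}$ and $\tilde v = 0$ on $\Gamma_{3r/2}$. Since the hypothesis $v=0$ on $\Gamma_{2r}$ has been dropped, $v$ may well be strictly positive on the corner circle $\{|x'|=3r/2,\;x_n=\vp(x')\}$, where the lateral and bottom faces of $\Omega_{3r/2}$ meet. Your boundary datum is then discontinuous there, so the claimed solvability in $C^2(\Omega_{3r/2})\cap C(\ol{\Omega_{3r/2}})$ fails, and the subsequent invocation of Lemma \ref{L2.3} (which requires $C(\ol{\Omega})$ regularity) is not justified as stated. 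The remedy — used in the paper — is to take any continuous $g$ with $0\le g\le v$, $g\equiv 0$ on $\Gamma_{3r/2}$, and $g\equiv v$ on the top cap $\Gamma^*_{3r/2}$ (interpolating along the lateral boundary), and solve with datum $g$. Everything else in your argument then goes through verbatim, since the comparison principle still gives $v_0\le v$, and the lower bound on $v_0(0,r)$ only uses the values of $g$ on the top.
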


\begin{proof}
    Having in mind the approximation lemma (Lemma \ref{L2.3}), we can assume that $a_{ij}$ are smooth. Take a continuous function $g$ on $\pa\Omega_{3r/2}$ such that $0\le g\le v$ on $\pa\Omega_{3r/2}$, $g\eq 0$ on $\Gamma_{3r/2}$ and $g\eq v$ on
    \[\Gamma^*_{3r/2}:=
    \big\{x=(x,,x_n)\in\rn:\; |x'|\le 3r/2,\;x_n-\vp(x')=3r/2 \big\}.\]
    Since $a_{ij}$ are smooth, there exists a solution $v_0\in C^2(\Omega_{3r/2})\cap(\ol{\Omega_{3r/2}})$ of the problem
    \[ Lv_0=0\itext{in}\Omega_{3r/2},\qquad
    v_0=g\itext{on}\pa\Omega_{3r/2}.\]
    By Theorem \ref{2.4}, applied to the functions $u$ and $v_0$ in $\Omega_{3r/2}$,
    \begin{equation}\label{2.15}
        \sup_{\Omega_r}\frac{u}{v_0}\le N\cdot \frac{u(0,r)}{v_0(0,r)}.
    \end{equation}
    Moreover, by the comparison principle, $0\le v_0\le v$ in $\Omega_{3r/2}\supset \Omega_r$, hence we can replace $v_0$ by $v$ in the left side. In the right side, we first apply Lemma \ref{L2.3} to the function $v_0$ in $\Omega_{3r/2}$ with $r_0:=3r/2$ and $x_0:=(0,3r/2)\in\Gamma^*_{3r/2}$, and then the Harnack inequality to the function $v$ in $\Omega_{2r}$. As a result, we get
    \[ v_0(0,r)\ge c_1\mu,\itext{where}
    \mu:=\inf_{\Gamma^*_{3r/2}}v_0=\inf_{\Gamma^*_{3r/2}}v
    \ge c_2\cdot v(0,r),\]
    with positive constants $c_1$ and $c_2$ depending only on $n,\nu$ and $K$. Therefore, from \er{2.15} it follows the desired estimate \er{2.13}.
    \end{proof}

\mysection{Proof of Theorems \ref{T1.8} and \ref{T1.9}} \label{S.3}

First of all, we write the integral condition $I(\psi)<\8$ in \er{1.6} in an equivalent ``discrete'' form.

\begin{lemma}\label{L3.1}
    Let $\psi(r)$ be a non-negative, non-decreasing function on $[0,r_0]$, where $r_0=\con>0$. Then $I(\psi)<\8$ if and only if \begin{equation}\label{3.1}
    \sum_{k=0}^{\8}\frac{h_k}{r_k}<\8\itext{where}
    r_k:=4^{-k}r_0,\quad h_k:=\psi(r_k).
    \end{equation}
\end{lemma}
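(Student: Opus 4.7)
The plan is to prove the equivalence by the standard Cauchy-condensation-type comparison of the integral with a geometric-scale Riemann sum, exploiting monotonicity of $\psi$ on each dyadic (or rather quadratic) ring $[r_{k+1},r_k]$.

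First I would partition the interval of integration as $(0,r_0]=\bigcup_{k=0}^{\infty}(r_{k+1},r_k]$, with $r_k=4^{-k}r_0$, so that $r_k-r_{k+1}=\tfrac34 r_k$ and
\[
\int_{r_{k+1}}^{r_k}\frac{dr}{r^{2}}=\frac{1}{r_{k+1}}-\frac{1}{r_k}=\frac{3}{r_k}.
\]
Then I would use the monotonicity of $\psi$ to sandwich $\psi(r)$ on $(r_{k+1},r_k]$ between $h_{k+1}=\psi(r_{k+1})$ and $h_k=\psi(r_k)$.

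The key two-sided estimate on each piece is therefore
\[
\frac{3\,h_{k+1}}{r_k}\;\le\;\int_{r_{k+1}}^{r_k}\frac{\psi(r)\,dr}{r^2}\;\le\;\frac{3\,h_k}{r_k}.
\]
Summing in $k$ and noting that $r_k=4r_{k+1}$ (so that $h_{k+1}/r_k=\tfrac14\, h_{k+1}/r_{k+1}$) gives
\[
\frac{3}{4}\sum_{k=1}^{\infty}\frac{h_k}{r_k}\;\le\;I(\psi)\;\le\;3\sum_{k=0}^{\infty}\frac{h_k}{r_k}.
\]
From these two inequalities the equivalence is immediate: the upper bound shows $\sum h_k/r_k<\infty\Rightarrow I(\psi)<\infty$, and the lower bound shows the converse (the missing $k=0$ term $h_0/r_0$ is finite anyway since $\psi(r_0)<r_0$).

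There is no real obstacle here; the only thing to be a bit careful about is the bookkeeping of the index shift between the two sides of the sandwich (the lower bound naturally produces $h_{k+1}/r_{k+1}$, not $h_k/r_k$), and the trivial observation that the first term $h_0/r_0$ is harmless because $\psi(r_0)<r_0$ implies $h_0/r_0<1$. Everything else is a direct computation and the monotonicity of $\psi$.
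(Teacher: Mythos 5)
Your proof is correct and follows essentially the same Cauchy-condensation idea as the paper: partition $(0,r_0]$ into the rings $(r_{k+1},r_k]$, use monotonicity of $\psi$ on each ring, and compare $I(\psi)$ with $\sum h_k/r_k$ up to harmless multiplicative constants and a single index shift. The only cosmetic difference is that you integrate $1/r^2$ exactly on each ring (getting $3/r_k$), whereas the paper uses the cruder bound $\text{length}\times\sup$ and $\text{length}\times\inf$ (getting $3/(4r_k)$ on the lower side), but both yield the same two-sided comparison; your remark that $h_0/r_0$ is a single finite term is all that is needed to close the index gap, and the appeal to $\psi(r_0)<r_0$ is not even necessary for that.
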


\begin{proof}
    Since $h_{k+1}\le\psi(r)\le h_k$ on $[r_{k+1},r_k]$, and $r_k-r_{k+1}=3r_{k+1}=3r_k/4$, we obtain
    \[I(\psi)=\sum_{k=0}^{\8}\int_{r_{k+1}}^{r_k}\frac{\psi(r)\,dr}{r^2}
    \ge\sum_{k=0}^{\8}\frac{3r_{k+1}h_{k+1}}{r_k^2}
    =\frac{3}{16} \sum_{k=0}^{\8}\frac{h_{k+1}}{r_{k+1}}.\]
    On the other hand, $I(\psi)\le\sum 3r_{k+1}h_k/r_{k+1}^2
    =12\sum h_k/r_k$. Therefore, $I(\psi)<\8$ if and only if $\sum h_k/r_k<\8$.
\end{proof}

\begin{corollary}\label{C3.2}
    If $I(\psi)<\8$, then for arbitrary constant $K_0>0$, there is a constant $0<R_0\le \min(r_0,h_0)$ such that the set
    \[ V_0:=\{x=(x',x_n):\,|x|<R_0,\;x_n>K_0\,|x'|\}\]
    is contained in $Q$.
\end{corollary}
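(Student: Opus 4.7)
The plan is to translate the integral condition $I(\psi)<\infty$ into a pointwise bound of the form $\psi(r)\le K_0 r$ for all sufficiently small $r$, and then check that the conical region $V_0$ fits under this bound. The only non-obvious ingredient is the passage from the averaged/summability information supplied by $I(\psi)$ to a uniform pointwise estimate on $\psi$.

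First I would apply Lemma \ref{L3.1} to rewrite $I(\psi)<\infty$ as the dyadic summability $\sum_{k\ge 0} h_k/r_k<\infty$; in particular $h_k/r_k\to 0$, so given $K_0>0$ there exists $k_0$ with $h_k/r_k\le K_0/4$ for all $k\ge k_0$. To convert this to a pointwise bound I would exploit the monotonicity of $\psi$: for $r\in (0,r_{k_0}]$, locate the unique $k\ge k_0$ with $r_{k+1}<r\le r_k$, and estimate $\psi(r)\le h_k\le (K_0/4)\,r_k=K_0\,r_{k+1}<K_0\,r$. This yields $\psi(r)<K_0 r$ throughout $(0,r_{k_0}]$; letting $r\downarrow 0$ also gives $\psi(0)=0$, which will be needed on the axis $x'=0$.

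Then I would set $R_0:=\min(r_{k_0},h_0)$. The required $R_0\le\min(r_0,h_0)$ holds automatically, since $r_{k_0}\le r_0$ and $h_0=\psi(r_0)<r_0$. Checking $V_0\subset Q$ for $x=(x',x_n)\in V_0$ reduces to three inequalities from the definition of $Q$: the bounds $|x'|<r_0$ and $x_n-\psi(|x'|)<r_0$ follow directly from $|x'|\le |x|<R_0\le r_0$ together with $\psi\ge 0$; and $x_n>\psi(|x'|)$ is trivial when $|x'|=0$ (it reduces to $x_n>0$, which holds in $V_0$ because $\psi(0)=0$), while for $|x'|>0$ we have $|x'|<R_0\le r_{k_0}$, so the previous step gives $\psi(|x'|)<K_0|x'|<x_n$. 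The main — though mild — obstacle is the extraction of the pointwise estimate $\psi(r)<K_0 r$ from the summability $\sum h_k/r_k<\infty$; once the monotonicity of $\psi$ is used along the dyadic grid $\{r_k\}$, the rest is routine bookkeeping.
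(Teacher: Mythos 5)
Your proposal is correct and follows essentially the same route as the paper: Lemma~\ref{L3.1} gives $\sum h_k/r_k<\infty$, hence $h_k/r_k\to 0$, then monotonicity of $\psi$ along the dyadic grid converts this to $\psi(r)<K_0 r$ for small $r$, from which $V_0\subset Q$ is immediate. The only cosmetic differences are your choice of threshold $K_0/4$ (versus the paper's equivalent $h_k/r_{k+1}\le K_0$), your choice $R_0:=\min(r_{k_0},h_0)$ (versus the paper's $\min(r_{k_0},h_{k_0})$, both acceptable), and your explicit verification that $\psi(0)=0$ — a point the paper passes over with ``obvious if $x'=0$.''
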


\begin{proof}
    From $I(\psi)<\8$ it follows $\sum h_k/r_k<\8$, hence $h_k/r_{k+1}=4h_k/r_k\to 0$ as $k\to\8$. Choose an integer $k_0\ge 1$ such that $h_k/r_{k+1}\le K_0$ for all $k\ge k_0$, and set $R_0:=\min(r_{k_0},h_{k_0})$. We claim that that each $x=(x',x_n)\in V_0$ belongs to $Q$. This is obvious if $x'=0$, so we can assume $x'\ne 0$. Then there is an integer $k\ge k_0$ (depending on $x$) such that $r_{k+1}\le |x'|<r_k\le R_0$. This implies
    \[\psi(|x'|)\le \psi(r_k)=:h_k\le K_0\,r_{k+1}\le K_0\,|x'|<x_n.\]
    which means $x\in Q$.
\end{proof}

The next lemma can be considered as a very special case of Theorem \ref{T1.8}. However, this ``model'' case contains the main difficulties, so that Theorem \ref{T1.8} in full generality follows easily by the comparison principle.

\begin{lemma}\label{L3.3}
    Let $Q$ be a set defined in \er{1.5}, where $r_0=\con>0$, and $\psi(r)$ is a non-negative, non-decreasing function on $[0,r_0]$, satisfying the condition $I(\psi)<\8$ in \er{1.6}. Let $v$ be a function in $C^2(Q)\cap C(\ol{Q})$, such that \[v>0,\quad Lv:=\sum_{i,j=1}^n a_{ij}D_{ij}v=0
    \itext{in}Q,\]
    and $v=0$ on $\Gamma:=(\pa Q)\cap \{x_n\in\psi(|x'|)\}$. Then
    \begin{equation}\label{3.2}
    \inf_{0<x_n \le r_0/2} \frac{v(0,x_n)}{x_n}>0.
    \end{equation}

    Note that the non-decreasing function $\psi(r)$ may be discontinuous. In order to guarantee that the set $\Gamma$ is connected, we define $\psi(r):=[\psi(r-),\psi(r+)]$ - the segment whose ends are one-sided limits of $\psi(r')$ as $r'\to r$, subject to restriction $r'<r$ or $r'>r$. Obviously, if $\psi$ is continuous at some point $r$, then this segment is reduced to the corresponding point $\psi(r)$.
\end{lemma}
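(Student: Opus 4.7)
The plan is to establish a uniform lower bound $v(0, r_k)/r_k \ge c > 0$ for all $k \ge 0$, where $r_k := 4^{-k} r_0$ and $h_k := \psi(r_k)$; combined with continuity of $v$ on $\ol{Q}$ and interior Harnack (Theorem \ref{T2.1}), this yields \er{3.2}. By Lemma \ref{L3.1}, the Dini summability $\sum_k h_k/r_k < \infty$ gives $h_k/r_k \to 0$, so I would work only with $k \ge k_0$ large enough that $h_k < r_k/8$; finitely many smaller scales are handled by a Harnack chain from a fixed interior reference point.

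At each such $k$, the monotonicity of $\psi$ yields the inclusion of the flat-bottomed cylinder $C_k := \{|x'| < r_k,\; h_k < x_n < h_k + r_k\}$ in $Q$. The linear function $w_k(x) := x_n - h_k$ is $L$-harmonic (using $b_i \eq 0$), positive in $C_k$, and vanishes on the Lipschitz bottom $\{x_n = h_k\}$; crucially, $v$ does \emph{not} vanish on this bottom, since the bottom lies interior to $Q$. Applying Corollary \ref{C2.7} to $(w_k, v)$, after the vertical translation identifying $C_k$ with a Bauman cylinder, yields the one-scale estimate
\[
    v(x) \;\ge\; \frac{2(x_n - h_k)}{N\, r_k}\, v(P_k), \qquad x \in C_k^* := \{|x'| < r_k/2,\; h_k < x_n < h_k + r_k/2\},
\]
with $P_k := (0, h_k + r_k/2)$. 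Writing $\lambda_k := 2 v(P_k)/r_k$, this reads $v/w_k \ge \lambda_k/N$ on $C_k^*$. For $k \ge k_0+1$, the overlap $P_k \in C_{k-1}^*$ holds and gives the naive recursion $\lambda_k \ge N^{-1}(1 - 2(h_{k-1}-h_k)/r_k)\,\lambda_{k-1}$, whose iteration alone would force $\lambda_k \to 0$ since $N > 1$.

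The decisive refinement, and the technical heart of the proof, is to replace the one-sided Corollary \ref{C2.7} by a two-sided estimate via the auxiliary Dirichlet solution $\tl v_k$ in $C_k$, defined by $\tl v_k = v$ on $\pa C_k$ off the bottom and $\tl v_k \eq 0$ on the bottom; then $\tl v_k \le v$ by the comparison principle, and the construction in the proof of Corollary \ref{C2.7} itself (taking $g \eq v$ on the top base and $g \eq 0$ on the bottom) gives $\tl v_k(P_k) \ge c_0\, v(P_k)$ for an absolute $c_0 \in (0,1)$. Since now $(w_k, \tl v_k)$ both vanish on the Lipschitz bottom, iterating Theorem \ref{T2.4} and Corollary \ref{C2.5} on concentric sub-cylinders produces the H\"older continuity of $w_k/\tl v_k$ up to $\{x_n = h_k\}$, and hence the existence of the nontangential limit $\gamma_k := \lim_{x \to (0, h_k)} \tl v_k(x)/w_k(x)$, comparable to $2\tl v_k(P_k)/r_k$ up to absolute constants. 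Matching the two-sided H\"older asymptotic $\tl v_k(0, x_n) = \gamma_k (x_n - h_k)(1 + O((x_n-h_k)^\alpha))$ to its scale-$(k-1)$ counterpart at the overlap $P_k \in C_{k-1}^*$ (using $v \ge \tl v_{k-1}$ and the comparability $\tl v_k(P_k) \approx v(P_k)$) replaces the $N^{-1}$ loss by a multiplicative error of the form $1 + O(h_{k-1}/r_k) + O(r_k^\alpha)$. Both resulting series converge by the Dini summability and the geometric nature of $r_k$, so $\log \gamma_k$ stays bounded and $\gamma_k \ge c > 0$ uniformly in $k$; combining with $v(x) \ge \tl v_k(x) \approx \gamma_k(x_n - h_k)$ for $x_n \in [2h_k, h_k + r_k/2]$ and varying $k$ covers all $x_n \in (0, r_0/2]$. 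The main obstacle is exactly this matching step: carefully handling the shift of the effective boundary base point from $(0, h_k)$ to $(0, h_{k-1})$ and coordinating the auxiliary functions $\tl v_k$ between neighboring scales so that the accumulated multiplicative errors are controlled precisely by the Dini condition $\sum h_k/r_k < \infty$ (consistent with Theorem \ref{T1.10}, where divergence of this sum causes the estimate to fail).
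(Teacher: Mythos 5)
Your overall plan---tracking a normalized boundary derivative across the dyadic scales $r_k$ and arguing that the per-scale multiplicative loss is controlled by a quantity summable under the Dini hypothesis---is in the right spirit, but the concrete mechanism you propose has a genuine gap precisely at the matching step you yourself flag as the technical heart. The H\"older estimate for $w_k/\tl v_k$ obtained by iterating Theorem \ref{T2.4} and Corollary \ref{C2.5} is \emph{scale-invariant}: after rescaling $C_k$ to unit size it gives
\[
\Bigl|\frac{\tl v_k}{w_k}(x) - \gamma_k\Bigr| \le C\,\gamma_k\Bigl(\frac{x_n-h_k}{r_k}\Bigr)^{\alpha},
\]
with the error measured in \emph{relative}, not absolute, distance to the base. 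Your claimed asymptotic $\tl v_k(0,x_n) = \gamma_k(x_n-h_k)\bigl(1+O((x_n-h_k)^{\alpha})\bigr)$ is therefore not what the estimate delivers; the correct error $O(((x_n-h_k)/r_k)^{\alpha})$ is of order one at $P_k$ and only becomes small near the bottom $\{x_n=h_k\}$. But evaluating near the bottom re-introduces the shift error from replacing the base height $h_k$ by $h_{k-1}$: at $x_n - h_{k-1} = \epsilon r_{k-1}$ the factor $(x_n-h_k)/(x_n-h_{k-1}) = 1 + O\bigl(\theta_{k-1}/\epsilon\bigr)$, while the H\"older error is $O(\epsilon^{\alpha})$. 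Balancing them gives a per-scale loss of order $\theta_{k-1}^{\alpha/(1+\alpha)}$, and $\sum_k\theta_k^{\alpha/(1+\alpha)}$ is in general \emph{not} summable under $\sum_k\theta_k<\8$ (take $\theta_k=k^{-2}$; since $\alpha\le 1$ for operators with merely measurable $a_{ij}$, the exponent $\alpha/(1+\alpha)\le 1/2$). So the accumulated multiplicative error is not controlled by the Dini condition, and the iteration does not close.

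The paper sidesteps H\"older exponents entirely by a monotone approximation from the \emph{outside}. It sets $Q_k := Q\cup C_{r_k}$ with $C_r := \{|x'|<r,\ 0<x_n<r\}$ (flat bottom fixed at $x_n=0$, not at $x_n=h_k$), solves $Lv_k=0$ in $Q_k$ with $v_k=v$ on $(\pa Q_k)\cap(\pa Q)$ and $v_k=0$ elsewhere on $\pa Q_k$, and notes $v_k\searrow v$. The one-sided boundary Harnack $\sup_{C_{r_k}} v_k/x_n \le N\,v_k(0,r_k)/r_k$ plus the observation that $(\pa Q)\cap\ol{C_{r_k}}\subset\{0\le x_n\le h_k\}$ yields the clean difference bound $0\le v_k-v \le N\theta_k\,v_k(0,r_k)$ in $Q$. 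Tracking $\mu_k := \inf_{C_{r_{k+1}}} v_k/x_n$ then gives $\mu_k-\mu_{k+1}\le N\theta_k\mu_k$, whose iteration closes exactly because $\sum\theta_k<\8$; no regularity exponent of the ratio enters. I would recommend reorganizing around this outer-approximation scheme rather than attempting to repair the two-sided H\"older matching, which appears to require a strictly stronger summability hypothesis than Dini.
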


\begin{proof}
    We assume that the coefficients $a_{ij}$ are smooth functions on $\rn$. The general case follows from the approximation lemma (Lemma \ref{L2.2}), because all the estimates in the proof do not depend on this smoothness. Using notations in Lemma \ref{L3.1}, denote $\theta_k:=h_k/r_k$. By this lemma, we have $\sum\theta_k<\8$. We can start our considerations with large enough $k\ge 1$. Therefore, without loss of generality, we assume that $0\le \theta_k\le\ve_0<1$ for all $k\ge 1$, where $\ve_0=\ve_0(n,\nu)$ is a small constant in $(0,1)$, which will be specified later.

    For integers $k\ge 1$, denote $Q_k:=Q\cup C_{r_k}$, where
    \[C_r:=\{x=(x',x_n)\in\rn:\; |x'|<r,\;0<x_n<r\}.\]
    We will approximate the given function $v$ by solutions $v_k\in C^2(Q_k)\cap C(\ol{Q_k})$ of the Dirichlet problem
    \[Lv_k=0\itext{in}Q_k,\qquad
    v_k=g_k\itext{on}\pa Q_k,\]
    where $g_k$ is a continuous function on $\pa Q_k$, defined as $g_k\eq v$ on $(\pa Q_k)\cap (\pa Q)$, and $g_k\eq 0$ on the remaining part of $\pa Q_k$. Note that $Q_k$ are Lipschitz domains, hence the existence of such solutions for equations with smooth coefficients is known. It is easy to see that $Q_k\searrow Q$, and by the comparison principle $v_k\searrow v$ in $Q$ as $k\to\8$.

    The following estimate is an important step in our proof:
    \begin{equation}\label{3.3}
        \sup_{C_{r_k}}\frac{v_k}{x_n}\le
        N\cdot\frac{v_k(0,r_k)}{r_k},
        \itext{where} N=N(n,\nu)\ge 1.
    \end{equation}
    Here both functions $v_k$ and $x_n$ are positive and satisfy the equation $Lv=0$ in the domain $\Omega_{2r}:=Q_r\cap C_{2r}$, and $v_k=0$ on the set $\Gamma_{2r}:=(\pa Q_r )\cap (\pa \Omega_{2r})$ with $r=r_k$. However, we cannot apply Corollary \ref{C2.7} directly, because $\Gamma_{2r}$ is not represented as the graph of a Lipschitz function. In order to fix this gap, note that $\Gamma_{2r}$ is a surface of rotation, and the function $\psi(r)$ is non-decreasing. Therefore, $\Gamma_{2r}$ is still the graph of a Lipschitz function \emph{locally} with an absolute constant $K$ in a neighborhood of each of its point $x_0$, in a rotated coordinate system centered at $x_0$. This allows us to estimate the ratio $v_k/x_n$ near $x_0$ by the same ratio at a point strictly inside of $\Omega_{2r}$, an then use the Harnack inequality in order to get \er{3.3} with a constant $N=N(n,\nu)\ge 1$. This argument is similar to that in Remark \ref{R2.6}. In the rest of the proof, $N$ denotes different positive constants depending only on $n$ and $\nu$.

    Next, note that $0\le x_n\le h_k:=\psi(r_k)$ on the set $(\pa Q)\cap \ol{C_k}$, hence by \er{3.3}, $0\le v\le v_k\le N\theta_kv_k(0,r_k)$ on this set. We also have $v=v_k$ on the rest of $\pa Q$. By the comparison principle, this yields
    \begin{equation}\label{3.4}
        0\le v_k-v_{k+1}\le v_k-v\le
        N\theta_kv_k(0,r_k)\itext{in} Q.
    \end{equation}
    Combining the Harnack inequality with Corollary \ref{C2.5}, we get
    \begin{equation}\label{3.5}
    0<\frac{v_k(0,r_k)}{r_k}
    \le \frac{Nv_k(0,r_{r+1})}{r_{k+1}}
    \le N\mu_k,\itext{where}
    \mu_k:=\inf_{C_{r_{k+1}}}\frac{v_k}{x_n}.
    \end{equation}
    Further, from an elementary inequality $\inf A_k-\inf B_k\le \sup (A_k-B_k)$ and $C_{r_{k+2}}\su C_{r_{k+1}}$ it follows
    \[ \mu_k-\mu_{k+1} \le \inf_{C_{r_{k+2}}}\frac{v_k}{x_n}-\inf_{C_{r_{k+2}}}\frac{v_{k+1}}{x_n}
    \le \sup_{C_{r_{k+2}}}\frac{v_k-v_{k+1}}{x_n}.\]
    Here the right side can be estimated by Theorem \ref{T2.4}. In combination with \er{3.4} and \er{3.5}, this gives us
    \[ \mu_k-\mu_{k+1} \le
    \frac{N(v_k-v_{k+1})(0,r_{k+2})}{r_{k+2}}
    \le\frac{N\theta_kv_k(0,r_k)}{r_k}
    \le N\theta_k\mu_k.\]

    As we noticed in the beginning of the proof, we can assume that $\theta_k:=h_k/r_k\le\ve_0$ for all $k$, with a convenient choice of the constant $\ve_0=\ve_0(n,\nu)\in (0,1)$. Choose $\ve_0$ such that in the previous expression, $\alpha_k:=N\theta_k\le N\ve_0\le 1/2$ for all $k$. By iteration, we obtain
    \[ \mu_{k+1}\ge (1-\alpha_k)\mu_k\ge
    (1-\alpha_k)(1-\alpha_{k-1})\cdots(1-\alpha_1)\mu_1.\]
    Finally, we use the fact that convergence of the series $\sum\alpha_j=N\sum\theta_j$ is equivalent to convergence of the product $\prod(1-\alpha_j)$. More specifically, from convexity of the function $f(\alpha):=-\ln(1-\alpha)$ it follows that its values lie between $\alpha$ and $2\ln 2\cdot\alpha$ for all $\alpha\in [0,1/2]$. Hence
    \[ -\ln\mu_{k+1}\le -\ln\mu_1
    -\sum_{j=1}^k\ln(1-\alpha_j)
    \le -\ln\mu_1+2\ln 2\sum_{j=1}^{\8}\alpha_j
    <\8\]
    for all $k$. Then $v_k(0,r_{k+1})/r_{k+1}\ge\mu_k\ge\con>0$ for all $k$, and by the Harnack inequality, same is true for the sequence $v_k(0,r_k)/r_k$. We can also assume that $N\theta_k\le 1/2$ in \er{3.4}, hence $v(0,r_k)/r_k\ge v_k(0,r_k)/2r_k\ge\con>0$ for all $k$.

    Now we see that the ratio $v(0,x_n)/x_n$ is separated from $0$ for $x_n=r_k:=4^{-k},\,k\ge 1$. By the Harnack inequality, this is also true for $r_{k+1}\le x_n\le r_k$, and \er{3.2} follows.
\end{proof}

\emph{Proof of Theorem \ref{T1.8}.}
    As in the preceding proof, we can assume that $a_{ij}$ are smooth. Replacing $r_0>0$ in \er{1.5} by a smaller number if necessary, we can also assume that $u$ is not identically $0$ on $\pa Q$. Choose an arbitrary function $g\in C(\pa Q)$, such that $0\le g\le u$ on $\pa Q$, $g\eq 0$ on $\Gamma:=(\pa Q)\cap \{x_n=\psi(|x'|)\}$, and $g$ is not identically $0$. Then define $v\in C^2(Q)\cap C(\ol{Q})$ as a solution of the equation $Lu=0$ in $Q$ with the boundary data $v=g$ on $\pa Q$. This function $v$ automatically satisfies all the assumptions of Lemma \ref{L3.3}, and moreover, by the comparison principle, $u\ge v>0$ in $Q$. Therefore, for the proof of \er{1.7}, it suffices to establish a similar property for the function $v$.

    Fix an arbitrary vector $\vect{l}=(l',l_n)\in\rn_+$, choose a constant $K_1>0$ such that $l_n>K_1|l'|$, and another constant $K_0\in (0,K_1)$. Finally take a constant $R_0\in (0,r_0]$ according to Corollary \ref{C3.2}. This guarantees that $Q$ contains the set $V_0:=\{|x|<R_0,\,x_n>K_0|x'|\}$. In turn, by our construction $V_0$ contains the set $V_1:=\{|x|<R_0/2,\,x_n>K_1|x'|\}$, and $t\vect{l}\in V_1$ for all $t$ in an interval $(0,t_0)$. By the Harnack inequality, $v(0,tl_n)\le Nv(t\vect{l})$ for all $t\in (0,t_0)$. Now the desired estimate follows from \er{3.2} with $x_n=tl_n$.
\hfill $\Box$
\medskip

In the rest of the paper, we skip some details of proofs which are similar to those in the proofs of Lemma \ref{L3.3} and Theorem \ref{T1.8}. In particular, we assume that $a_{ij}$ are smooth, so that the Dirichlet problem $Lu:=\sum a_{ij}D_{ij}u=0$ in $\Omega$ with the boundary condition $u=g$ on $\po$ has a classical solution for any bounded Lipschitz domain $\Omega$ and any function $g\in C(\po)$. The following lemma covers a ``model'' case for the proof of Theorem \ref{T1.9}.

\begin{lemma}\label{L3.4}
    Let $\psi(r)$ be a non-negative, non-decreasing function on $[0,r_0]$, with $I(\psi)<\8$. Define
    \begin{equation}\label{3.6}
        \begin{split}
      Q^* & := \{|x'|<r_0,\,-\psi(|x'|)<x_n<r_0\},\\
      \Gamma^* & : =(\pa Q^*)\cap \{-x_n\in\psi(|x'|)\}.
    \end{split}
    \end{equation}
    Let $w$ be a function in $C^2(Q^*)\cap C(\ol{Q^*})$, such that
    \[w>0,\quad Lw=0\itext{in}Q^*;\qquad
    w=0 \itext{on} \Gamma^*.\]
     Then the ratio $\,w(x)/|x|\,$ is bounded on $Q^*$. As in Lemma \ref{L3.3}, we assume $\psi(r)=[\psi(r-),\psi(r+)]$ for $0<r<r_0$.
\end{lemma}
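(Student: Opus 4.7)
The plan is to parallel the proof of Lemma~\ref{L3.3}, now approximating $Q^*$ from outside and establishing an upper bound for $w/|x|$ rather than a lower bound for $w/x_n$. First reduce to smooth $a_{ij}$ via Lemma~\ref{L2.2}, and set $r_k := 4^{-k} r_0$, $h_k := \psi(r_k)$, $\theta_k := h_k/r_k$; by Lemma~\ref{L3.1} we have $\sum_k \theta_k < \infty$, and after dropping finitely many initial terms we may assume each $\theta_k$ is as small as we need. Define the approximating domains
\[ Q_k^* := \{|x'| < r_0,\ -\tilde\psi_k(|x'|) < x_n < r_0\},\qquad \tilde\psi_k(s) := \max(\psi(s), h_k),\]
so that $Q_k^* \supset Q^*$ has a flat cap at $x_n = -h_k$ over $|x'| \le r_k$ and agrees with $Q^*$ for $|x'| > r_k$. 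Let $w_k$ be the classical solution to $Lw_k = 0$ in $Q_k^*$ with $w_k = 0$ on the cap and on the bumpy part of $\partial Q_k^*$, and $w_k = w$ on the top and lateral boundary. By the comparison principle, $w \le w_k$, $w_{k+1} \le w_k$, and $w_k \searrow w$ in $Q^*$.

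The analog of estimate (3.3) is, with $a_k := w_k(0, r_k)$ and $C_k := \{|x'| < r_k,\ 0 < x_n + h_k < r_k\}$,
\[ \sup_{C_k} \frac{w_k(x)}{x_n + h_k}\ \le\ N\,\frac{a_k}{r_k};\]
I would prove it by applying Corollary~\ref{C2.7} to $w_k$ and the $L$-harmonic function $v(x) := x_n + h_k$---both vanishing on the flat cap---in the shifted coordinate $y_n := x_n + h_k$, combined with the local-Lipschitz rotation argument of Remark~\ref{R2.6} applied to the non-decreasing graph $\tilde\psi_k$, and Harnack to replace the reference value $w_k(0, r_k - h_k)$ by $a_k$. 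The set $A := \partial Q_{k+1}^* \setminus \partial Q_k^*$ lies in $\{|x'| \le r_k,\ -h_k \le x_n \le -h_{k+1}\} \subset C_k$, where $x_n + h_k \le h_k$, so the above estimate yields $w_k \le N \theta_k a_k$ on $A$; the maximum principle in $Q_{k+1}^*$ then gives
\[ 0\ \le\ w_k - w_{k+1}\ \le\ N\theta_k a_k\quad \text{in}\ Q_{k+1}^*.\]

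Finally, define $\mu_k^* := \inf w_k/(x_n + h_k)$ over an appropriate scale-$r_{k+1}$ sub-cylinder above the cap; Corollary~\ref{C2.5} gives $a_k/r_k \le N \mu_k^*$, and the iteration step, mirroring (3.5) of Lemma~\ref{L3.3}, is $\mu_{k+1}^* \le (1 + N\theta_k)\,\mu_k^*$, proved by applying Theorem~\ref{T2.4} to $w_k - w_{k+1}$ using the bound just established, and absorbing the discrepancy $h_k - h_{k+1} \le \theta_k r_k$ between the two caps into the $O(\theta_k)$ error. Iterating yields $\mu_k^* \le \mu_0^* \exp(N \sum \theta_j) < \infty$, so $a_k/r_k$ is bounded; combined with $w \le w_k$ and the Harnack inequality this gives the desired bound on $w(x)/|x|$. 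The main obstacle is this last iteration step: unlike in Lemma~\ref{L3.3}, the caps of $Q_k^*$ and $Q_{k+1}^*$ sit at different heights, so the natural reference $x_n + h_k$ for $w_k$ differs from $x_n + h_{k+1}$ for $w_{k+1}$, and carefully bookkeeping this $O(\theta_k r_k)$ shift without losing the factor $\mu_k^*$ on the right-hand side is the delicate part.
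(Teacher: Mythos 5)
Your plan correctly identifies the overall architecture of the proof (Lipschitz approximation of $Q^*$, Bauman's comparison theorem applied to $w_k$ and a translate of $x_n$, an iterated estimate controlled by $\sum\theta_k<\infty$), but the specific choice of approximating \emph{from outside} creates an obstacle that the paper's proof is built to avoid, and your sketch does not resolve it.

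The paper approximates $Q^*$ \emph{from inside}: it sets $\psi_k\equiv 0$ on $[0,r_k]$ and $\psi_k\equiv\psi$ on $(r_k,r_0]$, so $Q_k^*\subset Q^*$, $w_k\nearrow w$, and---crucially---the flat cap sits at $x_n=0$ for every $k$. The comparison function is then $x_n$ for every $k$, the tracked quantity is $M_k:=\sup_{C_{r_{k+1}}}w_k/x_n$, and the increment bound $M_{k+1}-M_k\le\sup_{C_{r_{k+2}}}(w_{k+1}-w_k)/x_n$ follows from $\sup A-\sup B\le\sup(A-B)$ with no change of normalization between steps. Your outside approximation $Q_k^*\supset Q^*$ puts the cap at $x_n=-h_k$, which moves with $k$, and the normalization $x_n+h_k$ moves too. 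You flag the resulting ``$O(\theta_k r_k)$ shift'' as the delicate part; the difficulty is sharper than that, and is not merely bookkeeping. Writing
\[
\frac{w_{k+1}}{x_n+h_{k+1}}-\frac{w_k}{x_n+h_k}
=\frac{(w_{k+1}-w_k)(x_n+h_{k+1})+w_{k+1}(h_k-h_{k+1})}{(x_n+h_k)(x_n+h_{k+1})},
\]
the first term has the good sign, but the second term, evaluated near the cap $x_n\to(-h_{k+1})^+$, satisfies $(h_k-h_{k+1})/(x_n+h_k)\to 1$ while $w_{k+1}/(x_n+h_{k+1})$ stays comparable (by Bauman) to $a_{k+1}/r_{k+1}\sim\mu_k^*$. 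So the shift contributes an error of order $\mu_k^*$, not $\theta_k\mu_k^*$, and the iteration $\mu_{k+1}^*\le(1+N\theta_k)\mu_k^*$ does not close. There is also a sign/domain issue: with the cylinders shrinking in $k$, the elementary inequality $\inf A-\inf B\le\sup(A-B)$ bounds $\mu_k^*-\mu_{k+1}^*$ from above (which is how the paper uses it in Lemma~3.3), but it does not bound $\mu_{k+1}^*-\mu_k^*$ from above, which is what your iteration requires.

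In short: the fixed cap at $x_n=0$ obtained by flattening $\psi$ to zero on $[0,r_k]$ is not a cosmetic choice but the device that keeps the iteration error genuinely of size $O(\theta_k)$. To repair your approach you would need either to switch to inside approximation as the paper does, or to perform the iteration over a region kept at distance $\gtrsim r_k$ from the cap and then reconnect to the cap via a separate Bauman estimate at each scale; as written, the proposal has a gap precisely where you suspected it did.
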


\begin{proof}
    We approximate $Q^*$ by a sequence of domains $Q_k^*,\,k\ge 1$, with flat boundaries in the $r_k$-neighborhood of the origin. Namely, set
    \[Q_k^*:=\{x=(x',x_n):\;|x'|<r_0,\;
    -\psi_k(|x'|)<x_n<r_0\},\]
    where $\psi_k(r)\eq 0$ on $[0,r_k]$, and $\psi_k(r)\eq \psi(r)$ on $(r_k,r_0]$. Correspondingly, the given function $w$ will be approximated by solutions $w_k\in C^2(Q_k^*)\cap(\ol{Q_k^*})$ of the Dirichlet problems
    \[ Lw_k=0\itext{in} Q_k^*,\qquad
    w_k=g_k\itext{on}\pa Q_k^*,\]
    where the functions $g_k\in C(\pa Q_k^*)$ are defined by the equalities $\,g_k\eq w\,$ on $(\pa Q_k^*)\cap(\pa Q^*)$, and $g_k\eq 0$ on $(\pa Q_k^*)\sm(\pa Q^*)$. We have $Q_k^*\nearrow Q^*$, and by the comparison principle $w_k\nearrow w$ in $Q^*$ as $k\to\8$, if we formally extend $w_k\eq 0$ on $Q^*\sm Q_k^*$.
    As in the proof of Lemma \ref{L3.3}, we can assume that $\theta_k:=h_k/r_k\le \ve_0=\ve_0(n,\nu)$ - a small constant in $(0,1)$.

    We can apply Corollary \ref{C2.7} to the functions
    \[u:=w,\quad v:=x_n+h_{k-1}\itext{in} D_k:=\{|x'|<r_k,\;-\psi(|x'|)<x_n<r_k\}\]
    in the same way as we did it in the proof of \er{3.3}. These functions are positive, satisfy $Lu=Lv=0$ in a larger domain $D_{k-1}$, and $u:=w=0$ on its ``bottom'' $(\pa D_{k-1})\cap \{-x_n\in\psi(|x'|)\}$. Therefore,
    \[ \sup_{D_k}\frac{w}{x_n+h_{k-1}}
    \le \frac{Nw(0,r_k)}{r_k},\]
    From this estimate it follows
    \[0=w_k\le w\le N\theta_{k-1}w(0,r_k)\itext{on}(\pa Q_k^*)\cap \ol{D_k}.\]
    On the rest of $\pa Q_k^*$, we have $w_k=w$. By the comparison principle,
    \begin{equation}\label{3.7}
    0\le w_{k+1}-w_k \le w-w_k\le N\theta_{k-1}w(0,r_k)
    \itext{in} Q_k^*.
    \end{equation}
    In particular, assuming $N\theta_{k-1}\le N\ve_0\le 1/2$, we get $w(0,r_k)\le 2w_k(0,r_k)$.

    Further, we apply Corollary \ref{C2.7} once again, with $v\eq 1$, and then use the Harnack inequality. This implies
    \begin{equation}\label{3.8}
    \sup_{D_k}w\le Nw(0,r_k)\le Nr_{k+1}M_k,
    \itext{where}
    M_k:=\sup_{C_{r_{k+1}}}\frac{w_k}{x_n}.
    \end{equation}
    Using inequality $\sup A_k-\sup B_k\le \sup (A_k-B_k)$ and Theorem \ref{T2.4} with $u:=w_{k+1}-w_k,\;v:=x_n$ in $C_{r_{k+2}}\su C_{r_{k+1}}$, we obtain
    \[ M_{k+1}-M_k\le \sup_{C_{r_{k+2}}}\frac{w_{k+1}-w_k}{x_n}
    \le \frac{N(w_{k+1}-w_k)(0,r_{k+2})}{r_{k+2}}.\]
    Together with \er{3.7} and \er{3.8}, this implies
    \[ M_{k+1}-M_k \le N\theta_{k-1}w(0,r_k)/r_{k+2}
    \le N\theta_{k-1}M_k,\]
    so that $M_{k+1}\le (1+N\theta_{k-1})M_k$. Iterating this estimate and using the fact that from convergence of the series $\sum\theta_k$ it follows convergence of the product $\prod(1+N\theta_{k-1})$, we get the estimate $M_k\le NM_1$ for all $k\ge 1$. Finally, in order to prove the boundedness of $w(x)/|x|$, it suffices to show that its supremum over the set $Q^*\cap\{r_{k+1}<|x|\le r_k\}$, which is a subset of $D_k$, does not exceed a constant uniformly for all $k$. This is an immediate consequence of \er{3.8}: for each $x$ in this set,
    \[\frac{w(x)}{|x|}
    \le \frac{1}{r_{k+1}}\cdot\sup_{D_k}w
    \le NM_k\le NM_1<\8.\]
    Lemma is proved.
\end{proof}

\emph{Proof of Theorem \ref{T1.9}.}
    From our assumptions it follows that the set  is a subset of $Q^*$ defined in \er{3.6}. Replacing $r_0>0$ by a smaller number if necessary, we can assume that $u=0$ on $(\po)\cap Q^*$. Then the function $g$ on $\pa Q^*$ defined by the equalities $g\eq u$ on $(\pa Q^*)\cap\Omega$, and $g\eq 0$ on $(\pa Q^*)\sm \Omega$, belongs to $C(\pa Q^*)$. Assuming that $a_{ij}$ are smooth, we can define $w\in C^2(Q^8)\cap C(\ol{Q^*})$ as a solution to the equation $Lu=0$ in $Q^*$ with the boundary condition $w=g$ on $\pa Q^*$. By the comparison principle, $0<u\le w$ in $Q^*\cap\Omega$. Therefore, $u(x)/|x|$ is bounded in $\Omega\cap B_{r_0}(0)$ by Lemma \ref{L3.4}
\hfill $\Box$
\medskip

{}

\end{document}